\crefname{equation}{}{}
\crefname{lem}{Lemma}{Lemmas}
\crefname{thm}{Theorem}{Theorems}
\numberwithin{equation}{section}
\newcommand{\dual}[1]{\left\langle #1 \right\rangle}
\newcommand{\D}{\mathrm{D}}
\newcommand{\nm}[1]{\left\Vert #1 \right\Vert}
\newcommand{\snm}[1]{\left\vert #1 \right\vert}
\newtheorem{Def}{Definition}[section]
\newtheorem{lem}{Lemma}[section]
\newtheorem{rem}{Remark}[section]
\newtheorem{thm}{Theorem}[section]
\begin{document}


\title
{
  \Large\bf Regularity for time fractional wave problems
   \thanks
  {
    This work was supported by  Major Research Plan of National
    Natural Science Foundation of China (91430105).
  }
}
\author{
  Binjie Li \thanks{Email: libinjie@scu.edu.cn},
  Xiaoping Xie \thanks{Corresponding author. Email: xpxie@scu.edu.cn} \\
  {School of Mathematics, Sichuan University, Chengdu 610064, China}
}
\date{}
\maketitle

\begin{abstract}
  Using the Galerkin method, we obtain the unique existence of the weak solution
  to a time fractional wave problem, and establish some regularity estimates
  which reveal the singularity structure of the weak solution in time.

 \vskip 0.2cm\noindent {\bf Keywords:}  time fractional wave equation; weak
  solution;  regularity
\end{abstract}

%



\section{Introduction}
\label{sec:intro}
Let $ \Omega \subset \mathbb R^d $ be a bounded domain with $ C^2 $ boundary, $
1 < \alpha < 2 $, $ 0 < T < \infty $, $ u_0 \in H_0^1(\Omega) $, $ u_1 \in
L^2(\Omega) $ and $ f \in L^2(\Omega_T) $ with $ \Omega_T := \Omega \times (0,T)
$. This paper considers the following time fractional wave problem:
\begin{equation}
  \label{eq:model}
  \partial_t^\alpha (u-u_0-tu_1) - \Delta u = f  \phantom{0}
  \text{ in $ \Omega_T $, }
\end{equation}
subject to the boundary value condition that
\[
  u = 0 \quad \text{ on $ \partial\Omega \times [0,T] $. }
\]
Here $ \partial_t^\alpha: L^1(\Omega_T) \to \mathcal D'(\Omega_T) $, a
Riemann-Liouville fractional differential operator, is defined by $
\partial_t^\alpha := \partial_t^2 I_{t,0+}^{2-\alpha} $, where $ \partial_t:
\mathcal D'(\Omega_T) \to \mathcal D'(\Omega_T) $ denotes the standard
generalized differential operator with respect to the time variable $ t $, and $
I_{t,0+}^{2-\alpha}: L^1(\Omega_T) \to L^1(\Omega_T) $ is given by
\[
  (I_{t,0+}^{2-\alpha} v)(x,t) := \frac1{\Gamma(2-\alpha)}
  \int_0^t (t-s)^{1-\alpha} v(x,s) \, \mathrm{d}s,
  \quad (x,t) \in \Omega_T,
\]
for all $ v \in L^1(\Omega_T) $, with $ \Gamma(\cdot) $ denoting the standard
Gamma function. It appears that we have not imposed initial value conditions for
problem \cref{eq:model}, but it will be clear later that the initial value
conditions are actually contained in the governing equation \cref{eq:model},
provided $ f $, $ u_0 $ and $ u_1 $ are regular enough.

The above problem is a special case of a large class of problems, the fractional
diffusion-wave problems, that have attracted a considerable amount of research efforts  in
the field of numerical analysis in the past decade; see~\cite{Yuste2005,
Yuste2006, Chen2007, Deng2009, Li2009, Liu2011, Ford2011, Zeng2013, Jin2014,
Wang2014, Li2016, Ren2017} and the references therein. Because of the nonlocal
property of the fractional differential operator, the cost of memory and computing 
of an accurate approximation to problem \cref{eq:model} is much more expensive
than that to a corresponding normal wave problem. To reduce the cost,
high-accuracy algorithms are often preferred. However, high-accuracy numerical
algorithms generally require the solution to be of high regularity; especially,
for problem \cref{eq:model} the differentiability of the solution with respect
to the time variable $ t $ is of great importance. This is the primary
motivation for us to investigate the regularity for problem \cref{eq:model}.


Up to now, there have been many works devoted to the mathematical treatments of
problem \cref{eq:model}; see \cite{Mainardi1996, El-Sayed1996, Buckwar1998,
Agrawal2002, Chen2008, Pskhu2009, Sakamoto2011} and the references therein.
However, these works are not very useful for the numerical analysis. Recently, Li, Xie, and Zhang \cite{Li-X-Z2016} presented a new smoothness result for Caputo-type fractional ordinary
  differential equations, which reveals that, subtracting a non-smooth function
  that can be obtained by the information available, a non-smooth solution
  belongs to $ C^m $ for some positive integer $ m $. Later, 
Li and Xie~\cite{LiXie2017} discussed the regularity for time fractional
diffusion problems by the Galerkin method.  In this paper, using the same approach as
in~\cite{LiXie2017}, we obtain the unique existence of the weak solution to
problem \cref{eq:model}, and establish some new regularity estimates. These
regularity estimates demonstrate that the weak solution to problem
\cref{eq:model} generally has singularity in time; however, subtracting some
particular forms of singular functions, we can improve the regularity of the
weak solution. This is not only of theoretical value, but also can provide
insight into developing high-accuracy numerical algorithms.

The rest of this paper is organized as follows. In \cref{sec:notation} we
introduce some properties of the Riemann-Liouville fractional
integration/derivative operators. In \cref{sec:ode} we discuss the regularity
for an ordinary equation. Finally, in \cref{sec:main} we study the regularity
of the weak solution to problem \cref{eq:model}.

\section{Preliminaries}
\label{sec:notation}
We start by introducing a vector-valued Sobolev space. Let $ X $ be a separable
Hilbert space with inner product $ (\cdot,\cdot)_X $ and an orthonormal basis $
\{ e_k | \ k \in \mathbb N \} $. For $ 0 \leqslant \beta < \infty $, let $ H^\beta(0,T) $ denote the standard
Sobolev space~\cite{Tartar2007}, and define
\[
  H^\beta(0,T; X) := \left\{
    v:\ (0,T) \to X \middle| \
    \sum_{k=0}^\infty \nm{(v,e_k)_X}_{ H^\beta(0,T) }^2
    < \infty
  \right\},
\]
and equip this space with the following norm: for all $ v \in H^\beta(0,T; X) $, 
\[
  \nm{v}_{H^\beta(0,T; X)} := \left(
    \sum_{k=0}^\infty \nm{(v,e_k)_X}_{ H^\beta(0,T) }^2
  \right)^\frac12.\quad
\]
A standard argument in the theory of the $
\ell^2 $ space gives that $ H^\beta(0,T; X) $ is a Banach space. In particular, we
also use $ L^2(0, T; X) $ to denote the space $ H^0(0,T; X) $. Furthermore, for
$ v \in H^\beta(0,T; X) $ with $ \beta \geqslant 1 $, define
\[
  v'(t) := \sum_{k=0}^\infty d_k'(t) e_k, \quad 0 < t < T,
\]
where $ d_k(\cdot) := (v(\cdot), e_k)_X $, and $ d_k' $ denotes the weak
derivative of $ d_k $.
\begin{rem}
  It is evident that the spaces $ L^2(0,T; X) $ and $ H^1(0,T; X) $ defined
  above coincide respectively with the corresponding standard $ X $-valued
  Sobolev spaces~\cite{Cazenave1998}, with the same norms. Using the $ K
  $-method~\cite{Tartar2007}, we see that, for $ 0 < \beta < 1 $, the space $
  H^\beta(0,T; X) $ coincides with the interpolation space
  \[
    \big( L^2(0,T; X),\ H^1(0,T; X) \big)_{\beta,2},
  \]
  with equivalent norms. Thus, the space $ H^\beta(0,T; X) $, $ 0 \leqslant \beta
  \leqslant 1 $, is independent of the choice of orthonormal basis $ \{ e_k |\ k
  \in \mathbb N \} $ of $ X $; the case of $ \beta > 1 $ is analogous. In
  addition, the $ v' $ defined above coincides with the usual weak derivative of
  $ v $~\cite{Cazenave1998}.
\end{rem}

Then, let us introduce the Riemann-Liouville fractional integration and
derivative operators as follows~\cite{Samko1993,Podlubny1998}.
\begin{Def}
  For $ 0 < \beta < \infty $, define $ I_{0+}^\beta: L^1(0,T) \to L^1(0,T) $ and
  $ I_{T-}^\beta: L^1(0,T) \to L^1(0,T) $, respectively, by
  \begin{align*}
    I_{0+}^\beta v(t) &:= \frac1{ \Gamma(\beta) }
    \int_0^t (t-s)^{\beta-1} v(s) \, \mathrm{d}s,
    \quad 0 < t < T, \\
    I_{T-}^\beta v(t) &:= \frac1{ \Gamma(\beta) }
    \int_t^T (s-t)^{\beta-1} v(s) \, \mathrm{d}s,
    \quad 0 < t < T,
  \end{align*}
  for all $ v \in L^1(0,T) $.
\end{Def}

\begin{Def}
  For $ m-1 < \beta < m $ with $ m \in \mathbb N_{>0} $, define $ \D_{0+}^\beta:
  L^1(0,T) \to \mathcal D'(0,T) $ and $ \D_{T-}^\beta: L^1(0,T) \to \mathcal
  D'(0,T) $, respectively, by
  \[
    \D_{0+}^\beta := \D^m I_{0+}^{m-\beta}
    \quad \text{ and } \quad
    \D_{T-}^\beta := (-1)^m\D^m I_{T-}^{m-\beta},
  \]
  where $ \D: \mathcal D'(0,T) \to \mathcal D'(0,T) $ denotes the standard
  generalized differential operator.
\end{Def}

\begin{lem} (\cite{Samko1993})
  \label{lem:basic-1}
  If $ \beta, \gamma > 0 $, then
  \[
    I_{0+}^{ \beta + \gamma } = I_{0+}^\beta I_{0+}^\gamma, \quad
    I_{T-}^{ \beta + \gamma } = I_{T-}^\beta I_{T-}^\gamma.
  \]
\end{lem}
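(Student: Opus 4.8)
The plan is to prove the first identity $I_{0+}^{\beta+\gamma} = I_{0+}^\beta I_{0+}^\gamma$ directly from the definition via Fubini's theorem and the Beta integral, and then deduce the $I_{T-}$ identity by a reflection in the time variable. Fix $v \in L^1(0,T)$. First I would record that the composition on the right makes sense: $I_{0+}^\gamma v$ is the convolution of $v$ with the locally integrable kernel $t^{\gamma-1}/\Gamma(\gamma)$, so Young's inequality for convolutions gives $I_{0+}^\gamma v \in L^1(0,T)$, and hence $I_{0+}^\beta(I_{0+}^\gamma v)$ is well defined and lies in $L^1(0,T)$ for the same reason.

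Writing out the composition,
\[
  I_{0+}^\beta I_{0+}^\gamma v(t) = \frac{1}{\Gamma(\beta)\Gamma(\gamma)}
  \int_0^t (t-s)^{\beta-1}\left( \int_0^s (s-\tau)^{\gamma-1} v(\tau)\,\mathrm{d}\tau \right)\mathrm{d}s,
\]
I would exchange the order of integration over the triangle $\{0 < \tau < s < t\}$ to get
\[
  I_{0+}^\beta I_{0+}^\gamma v(t) = \frac{1}{\Gamma(\beta)\Gamma(\gamma)}
  \int_0^t v(\tau)\left( \int_\tau^t (t-s)^{\beta-1}(s-\tau)^{\gamma-1}\,\mathrm{d}s \right)\mathrm{d}\tau.
\]
The inner integral is evaluated with the substitution $s = \tau + (t-\tau)\sigma$, $\sigma \in (0,1)$, which turns it into $(t-\tau)^{\beta+\gamma-1}\int_0^1 \sigma^{\gamma-1}(1-\sigma)^{\beta-1}\,\mathrm{d}\sigma = (t-\tau)^{\beta+\gamma-1} B(\gamma,\beta)$, where $B$ denotes the Beta function. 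Invoking the classical relation $B(\gamma,\beta) = \Gamma(\gamma)\Gamma(\beta)/\Gamma(\beta+\gamma)$, the constants collapse and
\[
  I_{0+}^\beta I_{0+}^\gamma v(t) = \frac{1}{\Gamma(\beta+\gamma)}
  \int_0^t (t-\tau)^{\beta+\gamma-1} v(\tau)\,\mathrm{d}\tau = I_{0+}^{\beta+\gamma} v(t)
\]
for almost every $t \in (0,T)$, which is the claim.

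The one technical point that deserves care — and the only place I expect any friction — is the appeal to Fubini's theorem. I would justify it by running the computation above verbatim with $\snm{v}$ in place of $v$: this shows that the double integral of the nonnegative integrand $(t-s)^{\beta-1}(s-\tau)^{\gamma-1}\snm{v(\tau)}$ over the triangle equals $I_{0+}^{\beta+\gamma}\snm{v}(t)$, which is finite for almost every $t$ (again by Young's inequality, since $\snm{v} \in L^1(0,T)$), so the integrand is absolutely integrable and Fubini applies at such $t$. For the second identity, let $R$ be the reflection $(Rw)(t) := w(T-t)$; a direct change of variables shows $R I_{0+}^\beta R = I_{T-}^\beta$, and since $R$ is an involution, $I_{T-}^\beta I_{T-}^\gamma = R I_{0+}^\beta I_{0+}^\gamma R = R I_{0+}^{\beta+\gamma} R = I_{T-}^{\beta+\gamma}$. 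No deeper obstacle is anticipated; this is a classical computation.
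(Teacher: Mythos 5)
Your proof is correct and is precisely the classical Dirichlet-formula argument (Fubini plus the Beta integral, with the $I_{T-}$ case obtained by reflection) that the paper itself omits, deferring instead to the cited reference of Samko, Kilbas and Marichev where this same computation appears. The Fubini justification via the nonnegative integrand and the reflection identity $R I_{0+}^\beta R = I_{T-}^\beta$ are both handled properly, so nothing further is needed.
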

\begin{lem}(\cite{Samko1993})
  \label{lem:basic-2}
  Let $ 0 < \beta < \infty $. If $ u, v \in L^2(0,T) $, then
  \[
    \left( I_{0+}^\beta u, v \right)_{L^2(0,T)} =
    \left( u, I_{T-}^\beta v \right)_{L^2(0,T)}.
  \]
  If $ v \in L^p(0,T) $ with $ 1 \leqslant p \leqslant \infty $, then
  \begin{align*}
    \nm{I_{0+}^\beta v}_{L^p(0,T)} &\leqslant C \nm{v}_{L^p(0,T)}, \\
    \nm{I_{T-}^\beta v}_{L^p(0,T)} &\leqslant C \nm{v}_{L^p(0,T)},
  \end{align*}
  where $ C $ is a positive constant that only depends on $ T $, $ \beta $ and $
  p $.
\end{lem}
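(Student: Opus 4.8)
The plan is to recognize both operators as convolutions against the locally integrable power kernel and then invoke Young's inequality for the $ L^p $ bounds and Fubini's theorem for the duality. Set $ k_\beta(t) := t^{\beta-1}/\Gamma(\beta) $ for $ t > 0 $; since $ \beta - 1 > -1 $ this is integrable on $ (0,T) $ with $ \nm{k_\beta}_{L^1(0,T)} = T^\beta/\Gamma(\beta+1) $. Extending any $ v $ by zero outside $ (0,T) $, one has $ I_{0+}^\beta v = (k_\beta * v)\big|_{(0,T)} $, and, after the substitution $ s \mapsto T - s $, $ I_{T-}^\beta v(t) = (k_\beta * \check v)(T-t) $ where $ \check v(\sigma) := v(T-\sigma) $. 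I would prove the $ L^p $ estimates first, because the case $ p = 2 $ is needed to justify Fubini in the duality identity.

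For the estimates, I would apply Young's convolution inequality on $ \mathbb R $: for $ v \in L^p(0,T) $ extended by zero, $ \nm{k_\beta * v}_{L^p(\mathbb R)} \leqslant \nm{k_\beta}_{L^1(0,T)} \nm{v}_{L^p(0,T)} $; restricting back to $ (0,T) $ gives $ \nm{I_{0+}^\beta v}_{L^p(0,T)} \leqslant C \nm{v}_{L^p(0,T)} $ with $ C = T^\beta/\Gamma(\beta+1) $, a constant depending only on $ T $ and $ \beta $ (indeed independent of $ p $, which in particular disposes of the endpoint $ p = \infty $ with no special argument). The bound for $ I_{T-}^\beta $ follows immediately from the identity above together with the fact that $ \sigma \mapsto v(T-\sigma) $ is an $ L^p(0,T) $-isometry, or simply by repeating the same convolution argument directly.

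For the duality identity, I would write, for $ u, v \in L^2(0,T) $,
\[
  \left( I_{0+}^\beta u, v \right)_{L^2(0,T)}
  = \frac1{\Gamma(\beta)} \int_0^T\!\! \int_0^t (t-s)^{\beta-1}\, u(s)\, v(t) \,\mathrm{d}s\,\mathrm{d}t ,
\]
and then exchange the order of integration over the triangle $ \{0 < s < t < T\} $ to obtain $ \tfrac1{\Gamma(\beta)} \int_0^T u(s) \int_s^T (t-s)^{\beta-1} v(t)\,\mathrm{d}t\,\mathrm{d}s = \left( u, I_{T-}^\beta v \right)_{L^2(0,T)} $. The only delicate point --- and really the only obstacle --- is legitimizing Fubini's theorem, since for $ \beta < 1 $ the kernel blows up along the diagonal. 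Here I would use the $ p = 2 $ estimate just established: $ \int_0^T \int_0^t (t-s)^{\beta-1} \snm{u(s)}\snm{v(t)}\,\mathrm{d}s\,\mathrm{d}t = \Gamma(\beta)\left( I_{0+}^\beta \snm{u}, \snm{v} \right)_{L^2(0,T)} \leqslant \Gamma(\beta)\, C\, \nm{u}_{L^2(0,T)} \nm{v}_{L^2(0,T)} < \infty $ by the Cauchy--Schwarz inequality, so the double integral converges absolutely and Fubini applies. Everything else is a routine computation.
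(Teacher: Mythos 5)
Your proof is correct, and it is essentially the standard argument from the cited reference (Samko--Kilbas--Marichev), to which the paper defers without giving its own proof: the $L^p$ bounds via Young's convolution inequality with the $L^1$ kernel $t^{\beta-1}/\Gamma(\beta)$, and the duality identity via Fubini over the triangle $\{0<s<t<T\}$, with absolute convergence properly justified by Tonelli, the already-established $L^2$ bound, and the Cauchy--Schwarz inequality. Nothing is missing.
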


\begin{lem}
  \label{lem:basic-3}
  Let $ 1 < \beta < 2 $ and $ v \in H^1(0,T) $ with $ v(0) = 0 $. Then
  \[
    \D_{0+}^\beta v = \D_{0+}^{\beta-1} v',
  \]
  where $ v' $ denotes the weak derivative of $ v $.
\end{lem}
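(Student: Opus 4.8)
The plan is to reduce the claimed identity $\D_{0+}^\beta v = \D_{0+}^{\beta-1} v'$ to the definitions and to a single commutation between the fractional integral $I_{0+}^{2-\beta}$ and the ordinary derivative $\D$, exploiting the hypothesis $v(0)=0$. First I would write out both sides using the definitions: since $1<\beta<2$, we have $\D_{0+}^\beta = \D^2 I_{0+}^{2-\beta}$, while $0<\beta-1<1$ gives $\D_{0+}^{\beta-1} v' = \D\, I_{0+}^{2-\beta} v'$. So it suffices to show $\D^2 I_{0+}^{2-\beta} v = \D\, I_{0+}^{2-\beta} v'$ in $\mathcal D'(0,T)$, which (since $\D$ is injective up to constants on $\mathcal D'$, and here we can argue directly) will follow once I prove the cleaner statement $\D\, I_{0+}^{2-\beta} v = I_{0+}^{2-\beta} v'$.

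The key step, then, is to justify differentiating under the integral sign: for $v \in H^1(0,T)$ with $v(0)=0$, I claim $I_{0+}^{2-\beta} v \in H^1(0,T)$ with weak derivative $I_{0+}^{2-\beta} v'$. The cleanest way is an integration-by-parts / Fubini argument at the level of the convolution. Writing $\gamma := 2-\beta \in (0,1)$, we have $\Gamma(\gamma)\,(I_{0+}^\gamma v)(t) = \int_0^t (t-s)^{\gamma-1} v(s)\,ds$. Using $v(s) = \int_0^s v'(r)\,dr$ (valid because $v(0)=0$ and $v\in H^1$), substitute and swap the order of integration via Fubini (the integrand is in $L^1$ of the triangle $\{0<r<s<t\}$): this yields $\Gamma(\gamma)(I_{0+}^\gamma v)(t) = \int_0^t v'(r)\Big(\int_r^t (t-s)^{\gamma-1}\,ds\Big)dr = \frac1\gamma\int_0^t (t-r)^{\gamma} v'(r)\,dr$. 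Hence $I_{0+}^\gamma v = I_{0+}^{\gamma+1} v'$, which by \cref{lem:basic-1} equals $I_{0+}^1 I_{0+}^\gamma v' = I_{0+}^1 (I_{0+}^\gamma v')$; since $I_{0+}^\gamma v' \in L^1(0,T)$ by \cref{lem:basic-2}, the function $I_{0+}^1 w$ is absolutely continuous with weak derivative $w$ for any $w\in L^1(0,T)$, so $\D\, I_{0+}^\gamma v = I_{0+}^\gamma v'$ as desired.

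Finally I would assemble the pieces: apply $\D$ once more to $\D\, I_{0+}^{2-\beta} v = I_{0+}^{2-\beta} v'$ to get $\D^2 I_{0+}^{2-\beta} v = \D\, I_{0+}^{2-\beta} v'$, i.e. $\D_{0+}^\beta v = \D_{0+}^{\beta-1} v'$, all as distributions in $\mathcal D'(0,T)$. The main obstacle — really the only nontrivial point — is the Fubini/integration-by-parts manipulation and verifying the integrability needed to justify it near the singularities $s=t$ and $r=s$; once $v(0)=0$ is used to express $v$ as the integral of $v'$, everything reduces to \cref{lem:basic-1}, \cref{lem:basic-2}, and the elementary fact that $I_{0+}^1$ inverts $\D$ on $L^1$. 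An alternative to the explicit Fubini computation is to first establish the identity for smooth $v$ vanishing at $0$ and then pass to the limit using the continuity bounds in \cref{lem:basic-2}, but the direct computation is short enough that I would do it outright.
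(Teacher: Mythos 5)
Your proof is correct. The paper itself omits the proof of this lemma (declaring it straightforward), and your argument is exactly the natural one: writing $v(s)=\int_0^s v'(r)\,dr$ (valid since $v\in H^1(0,T)$ is absolutely continuous and $v(0)=0$), the Fubini computation — whose integrability you correctly verify — gives $I_{0+}^{2-\beta}v = I_{0+}^{3-\beta}v' = I_{0+}^1\big(I_{0+}^{2-\beta}v'\big)$, hence $\D\, I_{0+}^{2-\beta}v = I_{0+}^{2-\beta}v'$, and one more distributional derivative yields $\D^2 I_{0+}^{2-\beta}v = \D\, I_{0+}^{2-\beta}v'$, which is precisely $\D_{0+}^{\beta}v=\D_{0+}^{\beta-1}v'$.
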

\noindent 
For the proofs of \cref{lem:basic-1,lem:basic-2}, we refer the reader
to~\cite{Samko1993}, and 
since 
  the proof of \cref{lem:basic-3}  is
straightforward, we omit it here. 
In the rest of this paper, we shall use the
above three lemmas implicitly since they are frequently used. Also, we will use
directly the well-known properties of the standard Sobolev spaces, such as that
$ H^\beta(0,T) $ is continuously embedded into $ C[0,T] $ for all $ 0.5 < \beta
< \infty $, and that
\[
  \nm{v}_{L^2(0,T)} \leqslant C \nm{v'}_{L^2(0,T)}
\]
for all $ v \in H^1(0,T) $ with $ v(0) = 0 $, where $ C $ is a positive constant
that only depends on $ T $.

For convenience  we make the following conventions: by $ x \lesssim y $ we mean that there
exists a positive constant $ C $ that only depends on $ \alpha $, $ T $ or $
\Omega $, unless otherwise stated, such that $ x \leqslant C y $ (the value of $
C $ may differ at its each occurrence); by $ x \sim y $ we mean that $ x
\lesssim y \lesssim x $.

\begin{lem}
  \label{lem:Ialpha}
  If $ v \in L^2(0,T) $, then
  \begin{equation}
    \label{eq:Ialpha}
    \nm{ I_{0+}^\alpha v }_{ H^\alpha(0,T) } \lesssim
    \nm{v}_{L^2(0,T)}.
  \end{equation}
\end{lem}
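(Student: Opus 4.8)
The plan is to exploit the semigroup property $I_{0+}^\alpha = I_{0+}^1 I_{0+}^{\alpha-1}$ (\cref{lem:basic-1}) to reduce the claim, which concerns the order $1<\alpha<2$, to an analogous estimate for the order $\alpha-1\in(0,1)$. Indeed, with $w:=I_{0+}^{\alpha-1}v\in L^2(0,T)$ (using \cref{lem:basic-2}) we have $I_{0+}^\alpha v = I_{0+}^1 w$, that is $(I_{0+}^\alpha v)(t)=\int_0^t w(s)\,\mathrm ds$, so that $(I_{0+}^\alpha v)(0)=0$ and $(I_{0+}^\alpha v)' = w = I_{0+}^{\alpha-1}v$ in the weak sense. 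Invoking the standard characterization $\nm{f}_{H^\alpha(0,T)}^2 \sim \nm{f}_{L^2(0,T)}^2 + \nm{f'}_{H^{\alpha-1}(0,T)}^2$ for $1<\alpha<2$, it then suffices to establish $\nm{I_{0+}^\alpha v}_{L^2(0,T)}\lesssim\nm{v}_{L^2(0,T)}$ and $\nm{I_{0+}^{\alpha-1}v}_{H^{\alpha-1}(0,T)}\lesssim\nm{v}_{L^2(0,T)}$. The first bound is immediate from \cref{lem:basic-2}; the second, a fractional-order gain of one full order of smoothness, is the heart of the matter.

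For the second bound put $\gamma:=\alpha-1\in(0,1)$ and $g:=I_{0+}^\gamma v$. Its $L^2(0,T)$ norm is again controlled by \cref{lem:basic-2}, so only the Gagliardo seminorm $|g|_{H^\gamma(0,T)}^2=\iint_{(0,T)^2} |g(t)-g(\tau)|^2\,|t-\tau|^{-1-2\gamma}\,\mathrm dt\,\mathrm d\tau$ remains. I would extend $v$ by zero to a function $\tilde v\in L^2(\mathbb R)$ supported in $[0,T]$ and observe that on $(0,T)$ the function $g$ coincides with the convolution $k*\tilde v$, where $k(r):=r^{\gamma-1}\mathbf 1_{\{r>0\}}/\Gamma(\gamma)$. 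Since $|g|_{H^\gamma(0,T)}\le |k*\tilde v|_{\dot H^\gamma(\mathbb R)}$, it is enough to show $k*\tilde v\in\dot H^\gamma(\mathbb R)$ with seminorm $\lesssim\nm{\tilde v}_{L^2(\mathbb R)}$, and this follows from Plancherel once one identifies $\widehat{k*\tilde v}=\hat k\,\widehat{\tilde v}$ with $|\hat k(\xi)|\sim|\xi|^{-\gamma}$: then $|\xi|^\gamma\,|\widehat{k*\tilde v}(\xi)|\sim|\widehat{\tilde v}(\xi)|\in L^2(\mathbb R)$.

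The main technical obstacle is precisely this last identification: $k$ is not integrable at infinity (as $\gamma-1>-1$), so it must be handled as a tempered distribution, and one has to justify both the convolution theorem in this setting and the singular-multiplier formula $\hat k(\xi)=c_\gamma(i\xi)^{-\gamma}$, whose right-hand side is merely locally integrable near $\xi=0$ (where the condition $\gamma<1$ enters). A route that sidesteps these Fourier subtleties, and which I would keep in reserve, is to estimate $|g|_{H^\gamma(0,T)}$ directly: for $0\le\tau<t\le T$ one splits
\begin{align*}
  \Gamma(\gamma)\bigl(g(t)-g(\tau)\bigr)
  ={}& \int_0^\tau \bigl[(t-s)^{\gamma-1}-(\tau-s)^{\gamma-1}\bigr]v(s)\,\mathrm ds \\
  & {}+ \int_\tau^t (t-s)^{\gamma-1}v(s)\,\mathrm ds ,
\end{align*}
bounds the square of each piece by Cauchy--Schwarz, and then carries out elementary (though somewhat lengthy) Hardy-type estimates on the resulting kernels after dividing by $|t-\tau|^{1+2\gamma}$ and integrating. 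Either route gives $\nm{I_{0+}^{\alpha-1}v}_{H^{\alpha-1}(0,T)}\lesssim\nm{v}_{L^2(0,T)}$, which together with the $L^2$ bounds above and the norm equivalence of the first paragraph completes the proof of \cref{eq:Ialpha}.
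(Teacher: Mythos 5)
Your reduction is exactly the paper's: differentiate once via $I_{0+}^\alpha = I_{0+}\,I_{0+}^{\alpha-1}$ and reduce \cref{eq:Ialpha} to the order-$(\alpha-1)\in(0,1)$ estimate $\nm{I_{0+}^{\alpha-1}v}_{H^{\alpha-1}(0,T)}\lesssim\nm{v}_{L^2(0,T)}$. The only difference is that the paper simply cites this sub-estimate from \cite[Lemma 2.4]{LiXie2017}, whereas you sketch two (correct, if technically demanding) proofs of it; your Fourier route does need the care you already flag, since $k*\tilde v$ need not lie in $L^2(\mathbb R)$ when $\alpha-1\geqslant 1/2$, although the Plancherel identity for the Gagliardo seminorm survives because the finite differences $(k*\tilde v)(\cdot+h)-(k*\tilde v)(\cdot)$ are square integrable.
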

\begin{proof}
  Since
  \[
    \D I_{0+}^\alpha v = \D I_{0+} I_{0+}^{\alpha-1} v
    = I_{0+}^{\alpha-1} v,
  \]
  the estimate \cref{eq:Ialpha} follows directly from the following result:
  \[
    \nm{ I_{0+}^{\alpha-1} v }_{ H^{\alpha-1}(0,T) }
    \lesssim \nm{v}_{L^2(0,T)},
  \]
  which can be obtained by~\cite[Lemma 2.4]{LiXie2017}. This completes the proof.
\end{proof}

\begin{lem}
  \label{lem:core}
  Let $ v \in L^2(0,T) $. Then $ \D_{0+}^\frac{\alpha-1}2 v \in L^2(0,T) $ if
  and only if $ v \in H^\frac{\alpha-1}2(0,T) $; and $ \D_{T-}^\frac{\alpha-1}2
  v \in L^2(0,T) $ if and only if $ v \in H^\frac{\alpha-1}2(0,T) $. Moreover,
  if $ v \in H^\frac{\alpha-1}2(0,T) $, then
  \[
    \nm{ \D_{0+}^\frac{\alpha-1}2 v }_{L^2(0,T)}^2 \sim
    \nm{v}_{ H^\frac{\alpha-1}2(0,T) }^2 \sim
    \nm{ \D_{T-}^\frac{\alpha-1}2 v }_{L^2(0,T)}^2 \sim
    \left(
      \D_{0+}^\frac{\alpha-1}2 v,
      \D_{T-}^\frac{\alpha-1}2 v
    \right)_{L^2(0,T)}.
  \]
\end{lem}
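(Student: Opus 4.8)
Put $\sigma:=\tfrac{\alpha-1}{2}$, so that $0<\sigma<\tfrac12$. Since the reflection $w\mapsto w(T-\cdot)$ is an isometry of both $L^2(0,T)$ and $H^\sigma(0,T)$ which intertwines $\D_{0+}^\sigma$ and $\D_{T-}^\sigma$ (a change of variables in the defining integrals, the sign $(-1)^m$ in the definition of $\D_{T-}^\beta$ cancelling the one produced by $\D^m$), every claim about $\D_{T-}^\sigma$ reduces to the corresponding one about $\D_{0+}^\sigma$, and the two operators enter the bilinear term symmetrically. Hence the plan is to prove (i) $\D_{0+}^\sigma v\in L^2(0,T)$ if and only if $v\in H^\sigma(0,T)$, with $\nm{\D_{0+}^\sigma v}_{L^2(0,T)}\sim\nm{v}_{H^\sigma(0,T)}$; and (ii) $\bigl(\D_{0+}^\sigma v,\D_{T-}^\sigma v\bigr)_{L^2(0,T)}\sim\nm{v}_{H^\sigma(0,T)}^2$ for $v\in H^\sigma(0,T)$. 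The four-term equivalence in the statement then follows by transitivity.

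For the implication $\D_{0+}^\sigma v\in L^2(0,T)\Rightarrow v\in H^\sigma(0,T)$, I would first record the fractional fundamental theorem of calculus $I_{0+}^\sigma\D_{0+}^\sigma v=v$. Indeed $\D_{0+}^\sigma v=\D I_{0+}^{1-\sigma}v\in L^2(0,T)\subset L^1(0,T)$ while $I_{0+}^{1-\sigma}v\in L^1(0,T)$ by \cref{lem:basic-2}, so $I_{0+}^{1-\sigma}v$ is absolutely continuous; a H\"older estimate using $v\in L^2(0,T)$ and $\sigma<\tfrac12$ shows $(I_{0+}^{1-\sigma}v)(t)\to0$ as $t\to0^+$, whence $I_{0+}^{1-\sigma}v=I_{0+}^{1}\D_{0+}^\sigma v$, and applying $\D_{0+}^{1-\sigma}=\D I_{0+}^{\sigma}$ to both sides and simplifying with \cref{lem:basic-1,lem:basic-2} gives $v=I_{0+}^\sigma\D_{0+}^\sigma v$. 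Combining this with the smoothing bound $\nm{I_{0+}^\sigma w}_{H^\sigma(0,T)}\lesssim\nm{w}_{L^2(0,T)}$, which follows from \cite[Lemma 2.4]{LiXie2017} exactly as in the proof of \cref{lem:Ialpha}, yields
\[
  \nm{v}_{H^\sigma(0,T)}=\nm{I_{0+}^\sigma(\D_{0+}^\sigma v)}_{H^\sigma(0,T)}\lesssim\nm{\D_{0+}^\sigma v}_{L^2(0,T)},
\]
so in particular $v\in H^\sigma(0,T)$.

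The reverse implication $v\in H^\sigma(0,T)\Rightarrow\D_{0+}^\sigma v\in L^2(0,T)$, together with $\nm{\D_{0+}^\sigma v}_{L^2(0,T)}\lesssim\nm{v}_{H^\sigma(0,T)}$, is the crux, and this is the step where $\sigma<\tfrac12$ is used decisively. Since $\sigma<\tfrac12$, the space $C_0^\infty(0,T)$ is dense in $H^\sigma(0,T)$ and extension by zero is bounded from $H^\sigma(0,T)$ into $H^\sigma(\mathbb R)$, so it suffices to prove $\nm{\D_{0+}^\sigma\phi}_{L^2(0,T)}\lesssim\nm{\phi}_{H^\sigma(0,T)}$ for $\phi\in C_0^\infty(0,T)$ and then pass to the limit (the $L^2$-limit of $\D_{0+}^\sigma\phi_n$ coincides with $\D_{0+}^\sigma v$ because $\D_{0+}^\sigma$ maps $L^1(0,T)$ continuously into $\mathcal D'(0,T)$, by \cref{lem:basic-2}). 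For such $\phi$, its zero extension $\tilde\phi$ vanishes on $(-\infty,0)$, so on $(0,T)$ the operator $\D_{0+}^\sigma$ agrees with the whole-line (Liouville) fractional derivative of order $\sigma$, whose Fourier multiplier is $(\mathrm i\xi)^\sigma$; hence, by Plancherel,
\[
  \nm{\D_{0+}^\sigma\phi}_{L^2(0,T)}\leqslant\Bigl(\int_{\mathbb R}\snm{\xi}^{2\sigma}\snm{\widehat{\tilde\phi}(\xi)}^2\,\mathrm d\xi\Bigr)^{1/2}\leqslant\nm{\tilde\phi}_{H^\sigma(\mathbb R)}\lesssim\nm{\phi}_{H^\sigma(0,T)}.
\]
Together with the previous paragraph this establishes (i), and, after reflection, its $\D_{T-}^\sigma$ counterpart.

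It remains to prove (ii). The upper bound $\bigl(\D_{0+}^\sigma v,\D_{T-}^\sigma v\bigr)_{L^2(0,T)}\lesssim\nm{v}_{H^\sigma(0,T)}^2$ is the Cauchy--Schwarz inequality followed by (i) and its $\D_{T-}^\sigma$ counterpart. For the coercive lower bound I would again reduce to $\phi\in C_0^\infty(0,T)$: the product of the left and right whole-line fractional derivatives of $\tilde\phi$ (with multipliers $(\mathrm i\xi)^\sigma$ and $(-\mathrm i\xi)^\sigma$) is supported in the support of $\phi$, hence in $(0,T)$, so Plancherel gives
\[
  \bigl(\D_{0+}^\sigma\phi,\D_{T-}^\sigma\phi\bigr)_{L^2(0,T)}=\cos(\pi\sigma)\int_{\mathbb R}\snm{\xi}^{2\sigma}\snm{\widehat{\tilde\phi}(\xi)}^2\,\mathrm d\xi\gtrsim\nm{\phi}_{H^\sigma(0,T)}^2,
\]
where $\cos(\pi\sigma)>0$ precisely because $\sigma<\tfrac12$, and the last inequality is a fractional Poincar\'e inequality for functions supported in $[0,T]$ (a routine compactness argument); passing to the limit finishes (ii). \textbf{The main obstacle} is exactly this reverse inclusion and the coercivity estimate: neither can be obtained from the integral-operator calculus of \cref{lem:basic-1,lem:basic-2,lem:basic-3,lem:Ialpha} alone, and both rely, through the zero-extension to $\mathbb R$ and the Fourier transform, on the strict inequality $\sigma=\tfrac{\alpha-1}{2}<\tfrac12$ --- which is the structural reason the lemma concerns the half-order $\tfrac{\alpha-1}{2}$.
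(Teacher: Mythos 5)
Your proposal is correct, and it follows essentially the same route as the paper's source for this result: the paper gives no proof of its own but defers to \cite[Lemma 2.5]{LiXie2017}, whose argument is exactly the classical one you reconstruct --- zero extension to $\mathbb R$ (legitimate precisely because $\tfrac{\alpha-1}2<\tfrac12$), Fourier multipliers $(\mathrm i\xi)^{\sigma}$ and $(-\mathrm i\xi)^{\sigma}$, Plancherel, the coercivity constant $\cos(\pi\sigma)>0$, and the inverse formula $v=I_{0+}^{\sigma}\D_{0+}^{\sigma}v$ combined with the smoothing bound of \cref{lem:Ialpha} for the converse inclusion. Your identification of the reverse implication and the coercivity as the steps that genuinely require $\sigma<\tfrac12$ is exactly right.
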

\noindent The proof of the above lemma is exactly the same as~\cite[Lemma
2.5]{LiXie2017}.
\begin{lem}
  \label{lem:Dalpha}
  Let $ v \in H^\frac{\alpha+1}2(0,T) $ such that $ \D_{0+}^\alpha v \in
  L^2(0,T) $. Then
  \begin{align}
    v = I_{0+}^\alpha \D_{0+}^\alpha v,
    \label{eq:Dbeta-1} \\
    I_{0+} \D_{0+}^\alpha v = \D_{0+}^\alpha I_{0+}v,
    \label{eq:exchange}
  \end{align}
  and
  \begin{equation}
    \nm{v}_{H^\alpha(0,T)} \lesssim
    \nm{\D_{0+}^\alpha v}_{L^2(0,T)}.
    \label{eq:Dbeta-2}
  \end{equation}
\end{lem}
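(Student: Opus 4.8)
The plan is to exploit the decomposition $ \D_{0+}^\alpha = \D_{0+}^{\alpha-1}\D = \D I_{0+}^{2-\alpha}\D $ together with the mapping properties of $ I_{0+}^\beta $ already available, and to reduce everything to the first-order identity $ I_{0+}^1 \D w = w - w(0) $ valid for $ w \in H^1(0,T) $. First I would establish \cref{eq:Dbeta-1}. Since $ v \in H^{(\alpha+1)/2}(0,T) $ with $ (\alpha+1)/2 > 1 $, the function $ v $ is $ C^1 $ on $[0,T]$ (in fact $v \in H^1(0,T)$), so by \cref{lem:basic-3} one can peel off derivatives: writing $ g := \D_{0+}^\alpha v \in L^2(0,T) $, the goal is $ v = I_{0+}^\alpha g $. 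I would first apply $ I_{0+}^\alpha $ to $ g = \D^2 I_{0+}^{2-\alpha} v $ and use \cref{lem:basic-1} to write $ I_{0+}^\alpha \D^2 I_{0+}^{2-\alpha} v = I_{0+}^{\alpha-2}\cdot(\text{something})$; more carefully, I would integrate by parts twice, each time producing a boundary term, and argue those boundary terms vanish. The key sub-point is that $ \big(I_{0+}^{2-\alpha} v\big)(0) = 0 $ and $ \D\big(I_{0+}^{2-\alpha} v\big)(0) = 0 $: the first because $ I_{0+}^{2-\alpha} v = I_{0+}^{1-\alpha/2}I_{0+}^{1-\alpha/2}v$ is an integral starting at $0$, and the second because $ \D I_{0+}^{2-\alpha} v = I_{0+}^{1-\alpha} \D v + (\text{term from } v(0))$, which one checks is continuous and vanishes at $t = 0$ using $ v \in H^1 $ and $ 1-\alpha < 0 $ handled via $I_{0+}^{2-\alpha}\D v$ being well defined. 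With these, two applications of $ I_{0+}^1 \D (\cdot) = (\cdot) - (\cdot)(0)$ give $ I_{0+}^2 g = I_{0+}^{2-\alpha} v$, hence $ I_{0+}^\alpha g = I_{0+}^{\alpha-2}I_{0+}^2 g = v $ after cancelling $ I_{0+}^{2-\alpha} $ against $ I_{0+}^{\alpha-2}$ — I would phrase this cancellation cleanly via $ I_{0+}^{2-\alpha}(I_{0+}^{\alpha-2} h) = h $ which holds for $h$ in the appropriate range, or simply by applying $ I_{0+}^{2-\alpha}$ to both sides of the desired identity and using injectivity of $ I_{0+}^{2-\alpha}$ on $L^1$.

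Next, \cref{eq:exchange}: using \cref{eq:Dbeta-1} in the form $ v = I_{0+}^\alpha g $, I compute $ I_{0+} \D_{0+}^\alpha I_{0+} v$. We have $ \D_{0+}^\alpha I_{0+} v = \D^2 I_{0+}^{2-\alpha} I_{0+} v = \D^2 I_{0+}^{3-\alpha} v = \D^2 I_{0+}^1 I_{0+}^{2-\alpha} v = \D \, \D I_{0+}^1 (I_{0+}^{2-\alpha} v) = \D (I_{0+}^{2-\alpha} v) = \D_{0+}^{\alpha-1} v $, using again that $ (I_{0+}^{2-\alpha} v)(0) = 0 $. Then $ I_{0+}^1 \D_{0+}^{\alpha-1} v $: write $ \D_{0+}^{\alpha-1} v = \D I_{0+}^{2-\alpha} v $, so $ I_{0+}^1 \D I_{0+}^{2-\alpha} v = I_{0+}^{2-\alpha} v - (I_{0+}^{2-\alpha}v)(0) = I_{0+}^{2-\alpha} v $. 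On the other hand, $ \D_{0+}^\alpha I_{0+} v = \D_{0+}^{\alpha-1} v $ and separately $ \D_{0+}^\alpha v = g $ gives $ I_{0+}^1 g = I_{0+}^1 \D_{0+}^\alpha v $; so I must check $ I_{0+}^1 \D_{0+}^\alpha v = \D_{0+}^\alpha I_{0+} v $ directly, which is the above chain showing both equal $ I_{0+}^{2-\alpha} v$ — wait, one should double-check: $ I_{0+}^1 \D_{0+}^\alpha v = I_{0+}^1 \D^2 I_{0+}^{2-\alpha} v = \D I_{0+}^1 \D I_{0+}^{2-\alpha}v = \D (I_{0+}^{2-\alpha}v) = \D_{0+}^{\alpha-1}v = \D_{0+}^\alpha I_{0+} v$, the last equality being exactly what was shown. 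This settles \cref{eq:exchange}.

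Finally, the estimate \cref{eq:Dbeta-2}: from \cref{eq:Dbeta-1}, $ v = I_{0+}^\alpha(\D_{0+}^\alpha v)$, and \cref{lem:Ialpha} gives immediately $ \nm{v}_{H^\alpha(0,T)} = \nm{I_{0+}^\alpha \D_{0+}^\alpha v}_{H^\alpha(0,T)} \lesssim \nm{\D_{0+}^\alpha v}_{L^2(0,T)}$, so this step is essentially free once \cref{eq:Dbeta-1} is in hand.

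The main obstacle is the careful justification of vanishing boundary terms when moving $ \D $'s across $ I_{0+}^\beta$'s with $ \beta < 1$ — i.e.\ rigorously showing $ (I_{0+}^{2-\alpha} v)(0) = 0 $ and that $\D I_{0+}^{2-\alpha} v$ extends continuously to $0$ with value $0$, so that the elementary calculus identity $ I_{0+}^1 \D w = w - w(0)$ can legitimately be invoked at each stage. This requires using $ v \in H^{(\alpha+1)/2}(0,T) \hookrightarrow C[0,T]$ (and $v \in H^1$) to control the low-order behaviour near $t=0$, rather than just $ v \in L^2$; everything else is bookkeeping with \cref{lem:basic-1,lem:basic-2} and \cref{lem:Ialpha}.
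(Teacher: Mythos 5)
Your strategy for \cref{eq:Dbeta-1} --- apply $I_{0+}^\alpha$ to $\D^2 I_{0+}^{2-\alpha}v$ and show the boundary terms vanish --- is genuinely different from the paper's, which instead writes down the general solution $v = c_0 t^{\alpha-2} + c_1 t^{\alpha-1} + I_{0+}^\alpha \D_{0+}^\alpha v$ of the fractional ODE (using $I_{0+}^{2-\alpha}v \in H^2(0,T)$) and then kills the two homogeneous solutions because neither $t^{\alpha-2}$ nor $t^{\alpha-1}$ lies in $H^\frac{\alpha+1}2(0,T)$. The difference matters, because the step where you dispose of the boundary terms is exactly where your argument fails as written. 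You decompose $\D\big(I_{0+}^{2-\alpha}v\big)$ as $I_{0+}^{2-\alpha}v' + v(0)\,t^{1-\alpha}/\Gamma(2-\alpha)$ and assert that "one checks" this is continuous and vanishes at $t=0$. The second term is not continuous at $0$: since $1-\alpha<0$ it blows up like $t^{1-\alpha}$ unless $v(0)=0$. But $v(0)=0$ is not a hypothesis of the lemma; it is a consequence of $\D_{0+}^\alpha v\in L^2(0,T)$ that itself must be proved (a nonzero constant $v$ satisfies every regularity assumption you invoke, yet $\D_{0+}^\alpha v = v(0)\,t^{-\alpha}/\Gamma(1-\alpha)\notin L^2(0,T)$). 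So the vanishing of this boundary term is precisely the content to be established, not a routine check. It can be rescued --- from $I_{0+}^{2-\alpha}v\in H^2\hookrightarrow C^1[0,T]$ the sum is continuous, while $v'\in H^\frac{\alpha-1}2(0,T)$ gives $I_{0+}^{2-\alpha}v'\in H^\frac{3-\alpha}2(0,T)\hookrightarrow C[0,T]$ with value $0$ at $t=0$, forcing $v(0)=0$ and hence $\big(\D I_{0+}^{2-\alpha}v\big)(0)=0$ --- but none of this appears in your proposal, and it is the whole difficulty. (A minor slip: $v\in H^\frac{\alpha+1}2(0,T)$ with $\frac{\alpha+1}2<\frac32$ does not give $v\in C^1[0,T]$, only $v\in H^1(0,T)\cap C[0,T]$; you do not really use the $C^1$ claim, though.)

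Once \cref{eq:Dbeta-1} and the vanishing of $\big(\D I_{0+}^{2-\alpha}v\big)(0)$ are secured, your treatment of \cref{eq:exchange} and \cref{eq:Dbeta-2} is sound and close in spirit to the paper's: the paper likewise reduces \cref{eq:exchange} to $I_{0+}\D_{0+}^\alpha v = I_{0+}^{2-\alpha}v'$, obtaining the needed boundary value from $v' = I_{0+}^{\alpha-1}\D_{0+}^\alpha v$ (a consequence of \cref{eq:Dbeta-1}), and \cref{eq:Dbeta-2} is immediate from \cref{lem:Ialpha} in both versions.
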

\begin{proof}
  Let us first consider \cref{eq:Dbeta-1,eq:Dbeta-2}. Since $ \D_{0+}^\alpha v
  \in L^2(0,T) $ implies $ I_{0+}^{2-\alpha} v \in H^2(0,T) $, a straightforward
  calculation gives
  \[
    v(t) = c_0 t^{\alpha-2} + c_1 t^{\alpha-1} +
    (I_{0+}^\alpha \D_{0+}^\alpha v) (t),
    \quad 0 < t < T,
  \]
  where $ c_0 $ and $ c_1 $ are two real constants.   Note that \cref{lem:Ialpha} implies $
  I_{0+}^\alpha \D_{0+}^\alpha v \in H^\alpha(0,T) $, which, together with  the fact that $ v \in H^\frac{1+\alpha}2(0,T) $,  shows  $ c_0 =
  c_1 =0$.  Hence
  \cref{eq:Dbeta-1} holds. Moreover, \cref{eq:Dbeta-2} follows directly from
  \cref{lem:Ialpha}.

  Then, let us prove \cref{eq:exchange}. Note that $ v(0) = 0 $ due to \cref{eq:Dbeta-1} implies
  \[
    \D_{0+}^{\alpha-1} v' = \D_{0+}^\alpha v,
  \]
  which yields $ I_{0+}^{2-\alpha} v' \in H^1(0,T) $. Moreover, by
  \cref{eq:Dbeta-1} we have
  \[
    v' = I_{0+}^{\alpha-1} \D_{0+}^\alpha v,
  \]
  and so
  \[
    (I_{0+}^{2-\alpha} v')(0) =
    (I_{0+} \D_{0+}^\alpha v)(0) = 0.
  \]
  Consequently, using integration by parts gives
  \begin{align*}
    {} &
    \left(
      I_{0+} \D_{0+}^\alpha v, \varphi
    \right)_{L^2(0,T)} =
    \left(
      \D_{0+}^{\alpha-1} v', I_{T-} \varphi
    \right)_{L^2(0,T)} =
    \left(
      I_{0+}^{2-\alpha} v', \varphi
    \right)_{L^2(0,T)} \\
    ={} &
    \left(
      v', I_{T-}^{2-\alpha} \varphi
    \right)_{L^2(0,T)} =
    -\left(
      v, I_{T-}^{2-\alpha} \varphi'
    \right)_{L^2(0,T)} =
    \left(
      v, I_{T-}^{3-\alpha} \varphi''
    \right)_{L^2(0,T)} \\
    ={} &
    \left(
      I_{0+}^{3-\alpha} v, \varphi''
    \right)_{L^2(0,T)} =
    \dual{
      \D_{0+}^\alpha I_{0+} v, \varphi
    }
  \end{align*}
  for all $ \varphi \in \mathcal D(0,T) $, where $ \dual{\cdot,\cdot} $ denotes
  the duality pairing between $ \mathcal D'(0,T) $ and $ \mathcal D(0,T) $.
  This proves \cref{eq:exchange} and thus completes the proof of the lemma.
\end{proof}

\begin{lem}
  \label{lem:BLM}
  Suppose that $ v \in H^\frac{\alpha+1}2(0,T) $ with $ v(0) = 0 $. Then (i)-(iii) hold:
  \begin{enumerate}[(i)]
    \item 
    We have
      \begin{equation}
        \label{eq:BLM-1}
        \left(
          \D_{0+}^\frac{\alpha+1}2 v,\
          \D_{T-}^\frac{\alpha-1}2 v'
        \right)_{L^2(0,T)} \sim
        \nm{\D_{0+}^\frac{\alpha+1}2 v}_{ L^2(0,T) }^2 \sim
        \nm{v}_{ H^\frac{\alpha+1}2(0,T) }^2.
      \end{equation}
    \item
      If $ \varphi \in H^\frac{\alpha-1}2(0,T) $, then
      \begin{equation}
        \label{eq:BLM-2}
        \snm{
          \left(
            \D_{0+}^\frac{\alpha+1}2 v,\
            \D_{T-}^\frac{\alpha-1}2 \varphi
          \right)_{L^2(0,T)}
        } \lesssim
        \nm{v}_{H^\frac{\alpha+1}2(0,T)}
        \nm{\varphi}_{H^\frac{\alpha-1}2(0,T)}.
      \end{equation}
    \item
      If $ \varphi \in \mathcal D(0,T) $, then
      \begin{equation}
        \label{eq:BLM-3}
        \dual{ \D_{0+}^\alpha v, \varphi } =
        \left(
          \D_{0+}^\frac{\alpha+1}2 v,
          \D_{T-}^\frac{\alpha-1}2 \varphi
        \right)_{L^2(0,T)}.
      \end{equation}
  \end{enumerate}
\end{lem}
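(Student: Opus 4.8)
The plan is to reduce parts (i) and (ii) to \Cref{lem:core}, and to obtain (iii) by a direct computation built on fractional integration by parts. The key observation is the identity $\D_{0+}^{\frac{\alpha+1}2} v = \D_{0+}^{\frac{\alpha-1}2} v'$: since $\frac{\alpha+1}2 \in (1,2)$ and $v \in H^1(0,T)$ with $v(0)=0$, this is exactly \Cref{lem:basic-3}. Moreover $v \in H^{\frac{\alpha+1}2}(0,T)$ forces $v' \in H^{\frac{\alpha-1}2}(0,T)$, so \Cref{lem:core} applied to $v'$ gives $\D_{0+}^{\frac{\alpha-1}2} v',\, \D_{T-}^{\frac{\alpha-1}2} v' \in L^2(0,T)$ together with
\[
  \nm{\D_{0+}^{\frac{\alpha-1}2} v'}_{L^2(0,T)}^2 \sim \nm{v'}_{H^{\frac{\alpha-1}2}(0,T)}^2 \sim \left( \D_{0+}^{\frac{\alpha-1}2} v',\, \D_{T-}^{\frac{\alpha-1}2} v' \right)_{L^2(0,T)}.
\]

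Because $v(0)=0$, the Poincaré inequality yields $\nm{v}_{L^2(0,T)} \lesssim \nm{v'}_{L^2(0,T)} \le \nm{v'}_{H^{\frac{\alpha-1}2}(0,T)}$, and combined with the standard identification $\nm{v}_{H^{\frac{\alpha+1}2}(0,T)} \sim \nm{v}_{L^2(0,T)} + \nm{v'}_{H^{\frac{\alpha-1}2}(0,T)}$ this gives $\nm{v}_{H^{\frac{\alpha+1}2}(0,T)} \sim \nm{v'}_{H^{\frac{\alpha-1}2}(0,T)}$. Substituting the identity $\D_{0+}^{\frac{\alpha+1}2} v = \D_{0+}^{\frac{\alpha-1}2} v'$ into the chain above proves \cref{eq:BLM-1}, hence (i). For (ii), \Cref{lem:core} also gives $\nm{\D_{T-}^{\frac{\alpha-1}2}\varphi}_{L^2(0,T)} \lesssim \nm{\varphi}_{H^{\frac{\alpha-1}2}(0,T)}$ when $\varphi \in H^{\frac{\alpha-1}2}(0,T)$, so the Cauchy--Schwarz inequality together with \cref{eq:BLM-1} yields \cref{eq:BLM-2}.

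For (iii), fix $\varphi \in \mathcal D(0,T)$ and set $\mu := \frac{\alpha-1}2 \in (0,\tfrac12)$. By the definitions of $\D_{0+}^\alpha$ and of the distributional derivative (and since $I_{0+}^{2-\alpha} v \in L^2(0,T)$ by \Cref{lem:basic-2}),
\[
  \dual{ \D_{0+}^\alpha v,\, \varphi } = \dual{ \D^2 I_{0+}^{2-\alpha} v,\, \varphi } = \left( I_{0+}^{2-\alpha} v,\, \varphi'' \right)_{L^2(0,T)}.
\]
As $v(0)=0$ we have $v = I_{0+} v'$, so $I_{0+}^{2-\alpha} v = I_{0+}^{2-\alpha} I_{0+} v' = I_{0+}^{1-\mu}\big( I_{0+}^{1-\mu} v' \big)$ by \Cref{lem:basic-1}; write $g := I_{0+}^{1-\mu} v'$. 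Then $g(0)=0$, and since $\D g = \D_{0+}^{\mu} v' = \D_{0+}^{\frac{\alpha+1}2} v \in L^2(0,T)$ (the last step by \Cref{lem:basic-3} and part (i)), we have $g \in H^1(0,T)$. For $\psi \in \mathcal D(0,T)$ one checks, by differentiating under the integral sign and using $\psi(T)=0$, that $\D I_{T-}^{1-\mu}\psi = I_{T-}^{1-\mu}\psi'$; hence $\D_{T-}^\mu \psi = -I_{T-}^{1-\mu}\psi'$ and $I_{T-}^{1-\mu}\psi'' = -\big( \D_{T-}^\mu \psi \big)'$. Applying this with $\psi=\varphi$ and then \Cref{lem:basic-2},
\[
  \left( I_{0+}^{2-\alpha} v,\, \varphi'' \right)_{L^2(0,T)} = \left( g,\, I_{T-}^{1-\mu}\varphi'' \right)_{L^2(0,T)} = -\left( g,\, \big( \D_{T-}^\mu \varphi \big)' \right)_{L^2(0,T)},
\]
and an ordinary integration by parts, whose boundary terms vanish because $g(0)=0$ and $\D_{T-}^\mu\varphi(T) = -I_{T-}^{1-\mu}\varphi'(T) = 0$, gives
\[
  -\left( g,\, \big( \D_{T-}^\mu \varphi \big)' \right)_{L^2(0,T)} = \left( \D g,\, \D_{T-}^\mu \varphi \right)_{L^2(0,T)} = \left( \D_{0+}^{\frac{\alpha+1}2} v,\, \D_{T-}^{\frac{\alpha-1}2}\varphi \right)_{L^2(0,T)},
\]
which is \cref{eq:BLM-3}.

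Parts (i) and (ii) are routine once \Cref{lem:basic-3} and \Cref{lem:core} are in hand. The delicate point is (iii): one must verify that $g = I_{0+}^{1-\mu} v'$ genuinely belongs to $H^1(0,T)$ --- which is exactly where the hypothesis $v \in H^{\frac{\alpha+1}2}(0,T)$, i.e. part (i), enters --- and that every boundary term produced by the fractional and ordinary integrations by parts truly vanishes, which rests on $v(0)=0$ and on $\varphi$ being compactly supported in $(0,T)$. One should also keep track that each intermediate function lies in $L^2(0,T)$, so that the $L^2$ inner products, and not merely distributional pairings, are meaningful; this follows from \Cref{lem:basic-2} and \Cref{lem:core}.
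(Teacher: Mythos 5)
Your proof is correct and follows essentially the same route as the paper: parts (i)–(ii) via the identity $\D_{0+}^{\frac{\alpha+1}2} v = \D_{0+}^{\frac{\alpha-1}2} v'$ and \cref{lem:core}, and part (iii) by splitting the kernel symmetrically, using $(I_{0+}^{\frac{3-\alpha}2} v')(0)=0$ and $I_{0+}^{\frac{3-\alpha}2} v' \in H^1(0,T)$, and integrating by parts. The only cosmetic difference is that in (iii) you substitute $v=I_{0+}v'$ at the outset, whereas the paper first moves the fractional integral onto $\varphi''$ and shifts one ordinary derivative from $\varphi$ to $v$; both chains meet at $\left(I_{0+}^{\frac{3-\alpha}2}v',\ I_{T-}^{\frac{3-\alpha}2}\varphi''\right)_{L^2(0,T)}$ and finish identically.
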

\begin{proof}
  Let us first prove \cref{eq:BLM-1,eq:BLM-2}. By $ v(0) = 0 $ we have
  \begin{equation}
    \label{eq:23}
    \D_{0+}^\frac{\alpha+1}2 v =
    \D_{0+}^\frac{\alpha-1}2 v',
  \end{equation}
  so that, by $ v' \in H^\frac{\alpha-1}2(0,T) $, \cref{lem:core} implies
  \[
    \left(
      \D_{0+}^\frac{\alpha+1}2 v,
      \D_{T-}^\frac{\alpha-1}2 v'
    \right)_{L^2(0,T)} \sim
    \nm{ \D_{0+}^\frac{\alpha+1}2 v }_{L^2(0,T)}^2 \sim
    \nm{v'}_{ H^\frac{\alpha-1}2(0,T) }^2.
  \]
  Since $ v(0) = 0 $ also gives
  \[
    \nm{v'}_{ H^\frac{\alpha-1}2(0,T) } \sim
    \nm{v}_{ H^\frac{\alpha+1}2(0,T) },
  \]
  the estimate \cref{eq:BLM-1} follows immediately, and then \cref{eq:BLM-2}
  follows from the Cauchy-Schwarz inequality and \cref{lem:core}.

  Then, let us prove \cref{eq:BLM-3}. Note that $ \D_{0+}^\frac{\alpha-1}2 v'
  \in L^2(0,T) $ implies $ I_{0+}^\frac{3-\alpha}2 v' \in H^1(0,T) $. Also, by $
  (3-\alpha)/2 > 0.5 $, a simple computing yields
  \[
    (I_{0+}^\frac{3-\alpha}2 v')(0) = 0.
  \]
  Therefore, using integration by parts gives
  \begin{align*}
    {} &
    \dual{ \D_{0+}^\alpha v, \varphi} =
    \left(
      I_{0+}^{2-\alpha} v, \varphi''
    \right)_{L^2(0,T)} =
    \left(
      v, I_{T-}^{2-\alpha} \varphi''
    \right)_{L^2(0,T)} =
    -\left(
      v',
      I_{T-}^{2-\alpha} \varphi'
    \right)_{L^2(0,T)} \\
    ={} &
    \left(
      v',
      I_{T-}^{3-\alpha} \varphi''
    \right)_{L^2(0,T)} =
    \left(
      I_{0+}^\frac{3-\alpha}2 v',
      I_{T-}^\frac{3-\alpha}2 \varphi''
    \right)_{L^2(0,T)} =
    \left(
      \D_{0+}^\frac{\alpha-1}2 v',
      \D_{T-}^\frac{\alpha-1}2 \varphi
    \right)_{L^2(0,T)},
  \end{align*}
  for all $ \varphi \in \mathcal D(0,T) $, which, together with \cref{eq:23},
  proves \cref{eq:BLM-3}. This completes the proof of the lemma.
\end{proof}

\section{Regularity for an ordinary equation}
\label{sec:ode} This section considers the following problem: given $ c_0 $, $
c_1 \in \mathbb R $ and $ g \in L^2(0,T) $, seek $ y \in H^\alpha(0,T) $ such
that
\begin{equation}
  \label{eq:ode}
  \D_{0+}^\alpha (y - c_0 - c_1t) + \lambda y = g,
\end{equation}
where $ \lambda \geqslant 1 $ is a positive constant.
\begin{thm}
  \label{thm:ode-1}
  Problem \cref{eq:ode} has a unique solution $ y \in H^\alpha(0,T) $, and $ y $
  satisfies that $ y(0) = c_0 $ and
  \begin{equation}
    \label{eq:ode-weak}
    \left(
      \D_{0+}^\frac{\alpha+1}2 ( y - c_0 - c_1t ), \
      \D_{T-}^\frac{\alpha-1}2 z
    \right)_{L^2(0,T)} +
    \lambda (y,z)_{L^2(0,T)} =
    (f,z)_{L^2(0,T)}
  \end{equation}
  for all $ z \in H^\frac{\alpha-1}2 (0,T) $. Moreover,
  \begin{equation}
    \label{eq:ode-1}
    \nm{y}_{ H^\frac{\alpha+1}2(0,T) } +
    \lambda^\frac12 \nm{y}_{L^2(0,T)}
    \lesssim
    \nm{g}_{L^2(0,T)} + \lambda^\frac12 \snm{c_0} + \snm{c_1}.
  \end{equation}
\end{thm}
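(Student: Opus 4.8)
The plan is to recast \cref{eq:ode} as a variational problem, solve it by a Galerkin scheme, and then read off the regularity and the estimate \cref{eq:ode-1}. Write $ w := y - c_0 - c_1 t $ and $ g_* := g - \lambda(c_0 + c_1 t) \in L^2(0,T) $; solving \cref{eq:ode} for $ y \in H^\alpha(0,T) $ is the same as finding $ w $ in $ V := \{ v \in H^\frac{\alpha+1}2(0,T) : v(0) = 0 \} $ with $ \D_{0+}^\alpha w + \lambda w = g_* $. Indeed, any such $ w $ automatically lies in $ H^\alpha(0,T) $, since $ \D_{0+}^\alpha w = g_* - \lambda w \in L^2(0,T) $ and then \cref{lem:Dalpha} yields $ w = I_{0+}^\alpha \D_{0+}^\alpha w $ (so $ w(0) = 0 $, i.e.\ $ y(0) = c_0 $) and $ \nm{w}_{H^\alpha(0,T)} \lesssim \nm{\D_{0+}^\alpha w}_{L^2(0,T)} $. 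Pairing the equation with $ \varphi \in \mathcal D(0,T) $ and invoking \cref{lem:BLM}(iii) gives $ \big( \D_{0+}^\frac{\alpha+1}2 w, \D_{T-}^\frac{\alpha-1}2 \varphi \big)_{L^2(0,T)} + \lambda( w + c_0 + c_1 t, \varphi )_{L^2(0,T)} = (g, \varphi)_{L^2(0,T)} $, which by \cref{lem:BLM}(ii) extends by density to every test function $ z \in H^\frac{\alpha-1}2(0,T) $; this is precisely \cref{eq:ode-weak}.

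For the Galerkin approximation, fix an orthonormal basis $ \{ e_k \}_{k \geqslant 0} $ of $ L^2(0,T) $ consisting of $ C^\infty $ functions and such that $ \bigcup_n \operatorname{span}\{ e_0, \dots, e_n \} $ is dense in $ H^\frac{\alpha-1}2(0,T) $ (for instance the Fourier cosine basis), put $ \phi_k := I_{0+}^1 e_k \in V $, and set $ V_n := \operatorname{span}\{ \phi_0, \dots, \phi_n \} $. Seek $ w_n \in V_n $ with $ \big( \D_{0+}^\frac{\alpha+1}2 w_n, \D_{T-}^\frac{\alpha-1}2 v' \big)_{L^2(0,T)} + \lambda( w_n + c_0 + c_1 t, v' )_{L^2(0,T)} = (g, v')_{L^2(0,T)} $ for all $ v \in V_n $. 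Taking $ v = w_n $, \cref{lem:BLM}(i) together with $ (w_n, w_n')_{L^2(0,T)} = \tfrac12 w_n(T)^2 \geqslant 0 $ shows that the only homogeneous solution of this square system is $ w_n = 0 $, so $ w_n $ exists uniquely; the same test function, after bounding the $ c_0, c_1 $ terms crudely and using Young's inequality, also gives a bound on $ \nm{w_n}_{H^\frac{\alpha+1}2(0,T)} $ that is uniform in $ n $ (not yet sharp in $ \lambda $). Hence a subsequence of $ (w_n) $ converges weakly in $ H^\frac{\alpha+1}2(0,T) $ to some $ w $, which lies in $ V $ because evaluation at $ 0 $ is weakly continuous there (as $ \tfrac{\alpha+1}2 > \tfrac12 $), and which, by the density assumption, satisfies \cref{eq:ode-weak} for all $ z $. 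By the first paragraph $ y := w + c_0 + c_1 t $ then lies in $ H^\alpha(0,T) $, solves \cref{eq:ode}, and satisfies $ y(0) = c_0 $.

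It remains to prove \cref{eq:ode-1}, which I would do directly from \cref{eq:ode-weak}. Testing with $ z = y' $ — admissible since $ y' \in H^{\alpha-1}(0,T) \hookrightarrow H^\frac{\alpha-1}2(0,T) $ — and using $ \lambda (y, y')_{L^2(0,T)} = \tfrac\lambda2 \big( y(T)^2 - y(0)^2 \big) = \tfrac\lambda2 \big( y(T)^2 - c_0^2 \big) $ together with $ \D_{T-}^\frac{\alpha-1}2 y' = \D_{T-}^\frac{\alpha-1}2 w' + c_1 \D_{T-}^\frac{\alpha-1}2 1 $ gives
\[
  \big( \D_{0+}^\frac{\alpha+1}2 w, \D_{T-}^\frac{\alpha-1}2 w' \big)_{L^2(0,T)}
  + c_1 \big( \D_{0+}^\frac{\alpha+1}2 w, \D_{T-}^\frac{\alpha-1}2 1 \big)_{L^2(0,T)}
  + \tfrac\lambda2 y(T)^2
  = \tfrac\lambda2 c_0^2 + (g, y')_{L^2(0,T)} .
\]
Here the first term is $ \sim \nm{w}_{H^\frac{\alpha+1}2(0,T)}^2 $ by \cref{lem:BLM}(i), the second is $ \lesssim \snm{c_1}\,\nm{w}_{H^\frac{\alpha+1}2(0,T)} $ by \cref{lem:BLM}(ii), and $ (g, y')_{L^2(0,T)} \lesssim \nm{g}_{L^2(0,T)} \big( \nm{w}_{H^\frac{\alpha+1}2(0,T)} + \snm{c_1} \big) $, so Young's inequality yields $ \nm{w}_{H^\frac{\alpha+1}2(0,T)} \lesssim \nm{g}_{L^2(0,T)} + \lambda^\frac12 \snm{c_0} + \snm{c_1} $, hence the same bound for $ \nm{y}_{H^\frac{\alpha+1}2(0,T)} $. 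Testing next with $ z = y \in H^\frac{\alpha-1}2(0,T) $ and using \cref{lem:BLM}(ii) on $ \big( \D_{0+}^\frac{\alpha+1}2 w, \D_{T-}^\frac{\alpha-1}2 y \big)_{L^2(0,T)} $ to absorb $ \lambda \nm{y}_{L^2(0,T)}^2 $ against the right-hand side gives $ \lambda^\frac12 \nm{y}_{L^2(0,T)} \lesssim \nm{g}_{L^2(0,T)} + \lambda^\frac12 \snm{c_0} + \snm{c_1} $; adding the two bounds is \cref{eq:ode-1}. Applying \cref{eq:ode-1} to the difference of two solutions (so $ c_0 = c_1 = 0 $, $ g = 0 $) gives uniqueness.

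The point that I expect to demand the most care is the sharp dependence on the data in \cref{eq:ode-1}: the naive reduction pushes a term $ \lambda c_1 t $ into the source, which, handled bluntly, only produces the factor $ \lambda \snm{c_1} $ instead of $ \snm{c_1} $. The remedy is precisely the choice of test function $ z = y' $ (rather than $ z = (y - c_0 - c_1 t)' = w' $): then the troublesome $ c_1 t $ enters only through the constant $ \D_{T-}^\frac{\alpha-1}2 1 $, which \cref{lem:BLM}(ii) controls with no power of $ \lambda $, while the zeroth-order term collapses to the clean quantity $ \tfrac\lambda2\big( y(T)^2 - c_0^2 \big) $, producing the correct $ \lambda^\frac12 \snm{c_0} $. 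The remaining ingredients are routine given $ \tfrac{\alpha-1}2 < \tfrac12 < \tfrac{\alpha+1}2 $: that $ \D_{0+}^\frac{\alpha+1}2 v = \D_{0+}^\frac{\alpha-1}2 v' $ for $ v \in V $ (whence $ \D_{0+}^\frac{\alpha+1}2 v \in L^2(0,T) $ by \cref{lem:core}, so that the pairings above make sense), the weak continuity of $ v \mapsto v(0) $ on $ H^\frac{\alpha+1}2(0,T) $, and the density of $ \bigcup_n \operatorname{span}\{ e_0, \dots, e_n \} $ in $ H^\frac{\alpha-1}2(0,T) $.
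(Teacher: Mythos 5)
Your argument is correct, and the a priori estimate is obtained exactly as in the paper (test \cref{eq:ode-weak} with $ z = y' $ to exploit $ \lambda(y,y')_{L^2(0,T)} = \tfrac\lambda2(y^2(T)-c_0^2) $ and \cref{lem:BLM}(i)--(ii), then with $ z = y $ and absorb), so that part needs no comment. Where you genuinely diverge is the existence mechanism: the paper applies the Babu\u{s}ka--Lax--Milgram theorem directly to the continuous Petrov--Galerkin form on $ \{v \in H^\frac{\alpha+1}2(0,T): v(0)=0\} \times H^\frac{\alpha-1}2(0,T) $, with the inf-sup condition supplied by \cref{lem:BLM}, and gets uniqueness for free from that theorem; you instead build the solution by a concrete Galerkin scheme with the paired bases $ \phi_k = I_{0+}\,e_k $ (trial) and $ e_k = \phi_k' $ (test), prove solvability of each square system by the same coercive pairing $ v \mapsto v' $, extract a weak limit, and recover uniqueness a posteriori from \cref{eq:ode-1}. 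The two routes rest on the same structural fact (\cref{lem:BLM}(i)), but yours is more self-contained (no appeal to the abstract inf-sup theorem) at the cost of the compactness/density bookkeeping, and it mirrors the constructive spirit the paper itself uses later for the PDE. Two small points worth making explicit: (a) after passing to the limit you only know the weak identity, so to invoke \cref{lem:Dalpha} and conclude $ y \in H^\alpha(0,T) $ you should state the one-line reverse implication — restrict \cref{eq:ode-weak} to $ z = \varphi \in \mathcal D(0,T) $ and use \cref{lem:BLM}(iii) to read off $ \D_{0+}^\alpha w = g_* - \lambda w $ in $ \mathcal D'(0,T) $, hence in $ L^2(0,T) $; (b) the density of $ \bigcup_n \operatorname{span}\{e_0,\dots,e_n\} $ in $ H^\frac{\alpha-1}2(0,T) $ should be justified (it is standard for $ \frac{\alpha-1}2 < \frac12 $, e.g.\ via density of smooth functions and of trigonometric polynomials). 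Neither is a gap in substance.
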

\begin{proof}
  Let
  \[
    b(z) := \left( g,z \right)_{L^2(0,T)} +
    \left(
      \D_{0+}^\frac{\alpha+1}2 (c_1t),
      \D_{T-}^\frac{\alpha-1}2 z
    \right)_{L^2(0,T)} -
    \lambda \left( c_0, z \right)_{L^2(0,T)}
  \]
  for all $ z \in H^\frac{\alpha-1}2(0,T) $. Since \cref{lem:core} implies $ b
  \in H^\frac{1-\alpha}2(0,T) $ (the dual space of $ H^\frac{\alpha-1}2 (0,T)
  $), \cref{lem:BLM} and the Babu\u{s}ka-Lax-Milgram Theorem~\cite{Babuska1970}
  guarantee the unique existence of $ w \in H^\frac{\alpha+1}2(0,T) $ with $
  w(0) = 0 $ such that
  \begin{equation}
    \label{eq:w}
    \left(
      \D_{0+}^\frac{\alpha+1}2 w,
      \D_{T-}^\frac{\alpha-1}2 z
    \right)_{L^2(0,T)} +
    \lambda (w, z)_{L^2(0,T)} = b(z)
  \end{equation}
  for all $ z \in H^\frac{\alpha-1}2(0,T) $. Using \cref{lem:BLM} gives
  \begin{align*}
    \dual{\D_{0+}^\alpha w, \varphi} =
    \left(
      \D_{0+}^\frac{\alpha+1}2 w,
      \D_{T-}^\frac{\alpha-1}2 \varphi
    \right)_{L^2(0,T)}, \\
    \dual{\D_{0+}^\alpha (c_1t), \varphi} =
    \left(
      \D_{0+}^\frac{\alpha+1}2 (c_1t),
      \D_{T-}^\frac{\alpha-1}2 \varphi
    \right)_{L^2(0,T)},
  \end{align*}
  for all $ \varphi \in \mathcal D(0,T) $, so that from \cref{eq:w} it follows
  that
  \[
    \D_{0+}^\alpha ( w - c_1t ) =
    g - \lambda ( w+ c_0 ).
  \]
  Putting $ y := w + c_0 $ gives
  \[
    \D_{0+}^\alpha( y - c_0 - c_1t ) + \lambda y = g,
  \]
  and then by \cref{lem:Dalpha,lem:BLM} it is evident that $ y $ is the unique $
  H^\alpha(0,T) $-solution to problem \cref{eq:ode}. Also, $ y(0) = c_0 $ is
  obvious, and \cref{eq:ode-weak} follows directly from \cref{eq:w}.

  Now let us prove \cref{eq:ode-1}. Firstly, taking $ z := y' $ in
  \cref{eq:w} and using integration by parts yield
  \[
    \left(
      \D_{0+}^\frac{\alpha+1}2 ( y - c_0 - c_1t ), \
      \D_{T-}^\frac{\alpha-1}2 y'
    \right)_{L^2(0,T)} +
    \frac\lambda2 y^2(T) =
    \left(g, y' \right)_{L^2(0,T)} +
    \frac\lambda2 c_0^2,
  \]
  so that
  \begin{align*}
    {} &
    \left(
      \D_{0+}^\frac{\alpha+1}2 (y - c_0 - c_1t ), \
      \D_{T-}^\frac{\alpha-1}2 (y - c_0 - c_1t )'
    \right)_{L^2(0,T)} +
    \frac\lambda2 y^2(T) \\
    ={} &
    \left( g,y' \right)_{L^2(0,T)} + \frac\lambda2 c_0^2 -
    \left(
      \D_{0+}^\frac{\alpha+1}2 (y - c_0 - c_1t ), \
      \D_{T-}^\frac{\alpha-1}2 ( c_0 + c_1t )'
    \right)_{L^2(0,T)}.
  \end{align*}
  Therefore, \cref{lem:BLM}, the Cauchy-Schwarz inequality and the Young's
  inequality with $ \epsilon $ imply
  \[
    \nm{ y - c_0 - c_1t }_{
      H^\frac{\alpha+1}2 (0,T)
    }^2 + \lambda y^2(T) \lesssim
    \nm{g}_{L^2(0,T)}^2 + \lambda c_0^2 + c_1^2,
  \]
  and so
  \begin{equation}
    \label{eq:ode-1-1}
    \nm{y}_{ H^\frac{\alpha+1}2(0,T) } \lesssim
    \nm{g}_{L^2(0,T)} + \lambda^\frac12 \snm{c_0} + \snm{c_1}.
  \end{equation}
  Secondly, taking $ z := y $ in \cref{eq:w} gives
  \[
    \lambda \nm{y}_{L^2(0,T)}^2 =
    \left( g,y \right)_{L^2(0,T)} -
    \left(
      \D_{0+}^\frac{\alpha+1}2 (y - c_0 - c_1t), \
      \D_{T-}^\frac{\alpha-1}2 y
    \right)_{L^2(0,T)},
  \]
  so that using \cref{lem:core,lem:BLM}, the Cauchy-Schwarz inequality and the
  Young's inequality with $ \epsilon $ gives
  \[
    \lambda \nm{y}_{L^2(0,T)}^2 \lesssim
    \nm{ y - c_0 - c_1t }_{ H^\frac{\alpha+1}2(0,T) }
    \nm{y}_{H^\frac{\alpha-1}2(0,T)} +
    \lambda^{-1} \nm{g}_{L^2(0,T)}^2,
  \]
  which, together with \cref{eq:ode-1-1}, yields
  \begin{equation}
    \label{eq:ode-1-2}
    \lambda^\frac12 \nm{y}_{L^2(0,T)} \lesssim
    \nm{g}_{L^2(0,T)} + \lambda^\frac12 \snm{c_0} + \snm{c_1}.
  \end{equation}
  Finally, collecting \cref{eq:ode-1-1,eq:ode-1-2} leads to \cref{eq:ode-1}, and
  thus proves this theorem.
\end{proof}

 Denote, for $0 < t < T,$
  \begin{align*}
    \tilde S_1(t)   := \frac{ g(0) - \lambda c_0 }{ \Gamma(\alpha+1) }
    t^\alpha, \phantom{+1}  \quad      \tilde S_2(t)   := \frac{ g'(0) - \lambda c_1 }{ \Gamma(\alpha+2) }
    t^{\alpha+1}. 
  \end{align*}

\begin{thm}
  \label{thm:ode-2}
  Suppose that $ g \in H^1(0,T) $ and $ y $ is the solution to problem
  \cref{eq:ode}.  Then $ y \in C^1[0,T] $ with $ y'(0) = c_1 $, and
  \begin{equation}
    \label{eq:ode-2-1}
    \begin{split}
      {} &
      \nm{y- \tilde S_1}_{ H^\frac{\alpha+3}2(0,T) } +
      \lambda^\frac12 \nm{y}_{H^1(0,T)} +
      \lambda \nm{y}_{L^2(0,T)} \\
      \lesssim{} &
      \nm{g}_{H^1(0,T)} + \lambda^\frac12 \snm{c_0} +
      \lambda \snm{c_1} +
      \lambda \snm{ g(0)-\lambda c_0 }.
    \end{split}
  \end{equation}
  Furthermore, if $ 1.5 < \alpha < 2 $ and $ g \in H^2(0,T) $, then
  \begin{equation}
    \label{eq:ode-2-2}
    \begin{split}
      {} &
      \nm{ y -  \tilde S_1 - \tilde S_2 }_{ H^\frac{\alpha+5}2(0,T) } +
      \lambda^\frac12 \nm{y}_{H^2(0,T)} +
      \lambda \nm{y}_{H^1(0,T)} \\
      \lesssim{} &
      \nm{g}_{H^2(0,T)} + \lambda \snm{c_0} +
      \lambda \snm{c_1} +
      \lambda \snm{g(0) - \lambda c_0} +
      \lambda \snm{g'(0) - \lambda c_1}.
    \end{split}
  \end{equation}
\end{thm}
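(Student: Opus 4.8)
The plan is to bootstrap \cref{thm:ode-1} by peeling off the explicit singular profiles $\tilde S_1,\tilde S_2$ and reducing to a \emph{differentiated} version of \cref{eq:ode}. Put $w:=y-c_0-c_1t$; by \cref{thm:ode-1} we have $w\in H^\alpha(0,T)$ and $w(0)=0$, and rewriting \cref{eq:ode} together with $\D_{0+}^\alpha w=\D_{0+}^{\alpha-1}w'$ (\cref{lem:basic-3}) gives
\[
  \D_{0+}^{\alpha-1}w'+\lambda w = g-\lambda c_0-\lambda c_1t \in H^1(0,T).
\]
Since $\tilde S_1$ is a constant multiple of $t^\alpha$, a direct computation gives $\D_{0+}^{\alpha-1}\tilde S_1' = g(0)-\lambda c_0$ and $\D_{0+}^\alpha\tilde S_1=g(0)-\lambda c_0$, so the function $\zeta:=w-\tilde S_1$ satisfies
\[
  \D_{0+}^{\alpha-1}\zeta'+\lambda\zeta = G,\qquad G:=g-g(0)-\lambda c_1t-\lambda\tilde S_1\in H^1(0,T),\quad G(0)=0.
\]

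The crucial step for \cref{eq:ode-2-1} is the regularity upgrade $\zeta'\in H^{\alpha-1}(0,T)\Rightarrow\zeta'\in H^\alpha(0,T)$. Indeed $\D_{0+}^{\alpha-1}\zeta'=G-\lambda\zeta\in H^1(0,T)$ vanishes at $0$, so inverting $\D_{0+}^{\alpha-1}$ as in the proof of \cref{lem:Dalpha} yields $\zeta'=I_{0+}^{\alpha-1}(G-\lambda\zeta)$ — the would-be singular term proportional to $t^{\alpha-2}$ being ruled out since it is not in $H^{\alpha-1}(0,T)$ for $\alpha<2$ — whence $\zeta'=I_{0+}^\alpha(G-\lambda\zeta)'$ and \cref{lem:Ialpha} gives $\zeta'\in H^\alpha(0,T)$ with $\zeta'(0)=0$. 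We may therefore differentiate the $\zeta$-equation to get $\D_{0+}^\alpha\zeta'+\lambda\zeta'=G'$, so $\zeta'$ is the $H^\alpha(0,T)$-solution of \cref{eq:ode} with $c_0=c_1=0$ and right-hand side $G'\in L^2(0,T)$; \cref{thm:ode-1} then gives
\[
  \nm{\zeta'}_{H^\frac{\alpha+1}2(0,T)}+\lambda^\frac12\nm{\zeta'}_{L^2(0,T)}
  \lesssim \nm{G'}_{L^2(0,T)}
  \lesssim \nm{g}_{H^1(0,T)}+\lambda\snm{c_1}+\lambda\snm{g(0)-\lambda c_0}.
\]
Because $\zeta(0)=\zeta'(0)=0$, this controls $\nm{\zeta}_{H^{(\alpha+3)/2}(0,T)}$ and $\lambda^{1/2}\nm{\zeta}_{H^1(0,T)}$; for the term $\lambda\nm{y}_{L^2(0,T)}$ one returns to the equation, writing $\lambda y=g-\D_{0+}^\alpha w=g-(g(0)-\lambda c_0)-I_{0+}^{2-\alpha}\zeta''$ (using $\zeta(0)=\zeta'(0)=0$), so that $\lambda\nm{y}_{L^2}\lesssim\nm{g}_{L^2}+\snm{g(0)-\lambda c_0}+\nm{\zeta''}_{L^2}\lesssim\nm{g}_{L^2}+\snm{g(0)-\lambda c_0}+\nm{G'}_{L^2}$ — here it is essential to bound $\zeta''$ through the $H^{(\alpha+1)/2}$-estimate for $\zeta'$ rather than to bound $\lambda\nm{\zeta}_{L^2}$ by $\lambda\nm{\zeta'}_{L^2}$, which would lose a factor $\lambda^{1/2}$. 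Combining these with $y=c_0+c_1t+\tilde S_1+\zeta$ and $\nm{\tilde S_1}_{H^1}\lesssim\snm{g(0)-\lambda c_0}$ yields \cref{eq:ode-2-1}; moreover $y\in C^1[0,T]$ with $y'(0)=c_1$, since $\zeta\in H^{(\alpha+3)/2}(0,T)\hookrightarrow C^1[0,T]$ with $\zeta(0)=\zeta'(0)=0$ while $\tilde S_1$, a multiple of $t^\alpha$ with $\alpha>1$, lies in $C^1[0,T]$ with $\tilde S_1'(0)=0$.

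For \cref{eq:ode-2-2} the idea is simply to iterate. When $1.5<\alpha<2$ and $g\in H^2(0,T)$ we have $t^{\alpha-1}\in H^1(0,T)$, hence $G'=g'-\lambda c_1-\lambda\tfrac{g(0)-\lambda c_0}{\Gamma(\alpha)}t^{\alpha-1}\in H^1(0,T)$ with $G'(0)=g'(0)-\lambda c_1$. Applying the already-proven \cref{eq:ode-2-1} to the equation $\D_{0+}^\alpha\zeta'+\lambda\zeta'=G'$ (with $c_0=c_1=0$), its associated first profile $\tfrac{G'(0)}{\Gamma(\alpha+1)}t^\alpha$ is exactly $\tilde S_2'$, so
\[
  \nm{\zeta'-\tilde S_2'}_{H^\frac{\alpha+3}2(0,T)}+\lambda^\frac12\nm{\zeta'}_{H^1(0,T)}+\lambda\nm{\zeta'}_{L^2(0,T)}
  \lesssim \nm{G'}_{H^1(0,T)}+\lambda\snm{g'(0)-\lambda c_1}.
\]
Since $(\zeta-\tilde S_2)(0)=0$ this bounds $\nm{\zeta-\tilde S_2}_{H^{(\alpha+5)/2}(0,T)}$, $\lambda^{1/2}\nm{\zeta}_{H^2(0,T)}$ and $\lambda\nm{\zeta}_{H^1(0,T)}$; combining with $y-\tilde S_1-\tilde S_2=c_0+c_1t+(\zeta-\tilde S_2)$, $y=c_0+c_1t+\tilde S_1+\zeta$, $\nm{\tilde S_1}_{H^2}\lesssim\snm{g(0)-\lambda c_0}$ (valid since now $t^\alpha\in H^2(0,T)$), and $\nm{G'}_{H^1}\lesssim\nm{g}_{H^2}+\lambda\snm{c_1}+\lambda\snm{g(0)-\lambda c_0}$ gives \cref{eq:ode-2-2}. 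The main obstacle is the regularity upgrade for $\zeta'$ in the second paragraph, which licenses differentiating the equation and rests on inverting $\D_{0+}^{\alpha-1}$ and discarding the $t^{\alpha-2}$ singularity; the remainder is routine, the one delicate point being the sharp $\lambda$-powers, in particular estimating $\lambda\nm{y}_{L^2(0,T)}$ directly from the equation.
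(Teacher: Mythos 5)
Your proof is correct, and it rests on the same decomposition and the same key reduction as the paper: in both arguments the function $\zeta':=(y-c_0-c_1t-\tilde S_1)'$ is identified as the $H^\alpha(0,T)$-solution of \cref{eq:ode} with zero initial data and forcing $G'=g'-\lambda(c_1+\tilde S_1')$, after which \cref{thm:ode-1} (and, for the second estimate, the already-proved first part) delivers all the bounds, including the direct estimation of $\lambda\nm{y}_{L^2(0,T)}$ from the equation via $\D_{0+}^\alpha\zeta=I_{0+}^{2-\alpha}\zeta''$. The one genuine difference is directional. The paper works \emph{backwards}: it solves the differentiated problem \cref{eq:lxy} for $w$ by \cref{thm:ode-1}, sets $y:=c_0+c_1t+\tilde S_1+I_{0+}w$, verifies via \cref{lem:Dalpha} that this solves \cref{eq:ode}, and then appeals to the uniqueness in \cref{thm:ode-1} to conclude that this constructed function \emph{is} the given solution; no a priori regularity of $y$ beyond $H^\alpha$ is ever needed. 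You work \emph{forwards}: you take the given solution and justify differentiating its equation by first proving the regularity upgrade $\zeta'\in H^\alpha(0,T)$, which requires the extra step of inverting $\D_{0+}^{\alpha-1}$ and discarding the homogeneous term $ct^{\alpha-2}$ on the grounds that it fails to lie in $H^{\alpha-1}(0,T)$ (your exponent check is right: $t^{\alpha-2}\in H^s(0,T)$ only for $s<\alpha-\tfrac32$). Your route costs you this bootstrapping lemma but avoids re-deriving the equation for the constructed candidate; the paper's route is slightly shorter because uniqueness does that work for free. Both are sound, and the $\lambda$-bookkeeping in your version matches the stated estimates.
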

\begin{proof}
  Let us first prove that $ y \in C^1[0,T] $ with $ y'(0) = c_1 $. By
  \cref{thm:ode-1}, there exists a unique $ w \in H^\alpha(0,T) $ with $ w(0) =
  0 $ such that
  \begin{equation}
    \label{eq:lxy}
    \D_{0+}^\alpha w + \lambda w = g' - \lambda(c_1 + \tilde S_1'),
  \end{equation}
  and
  \begin{equation}
    \label{eq:ode-2-1-1}
    \begin{split}
      {} &
      \nm{w}_{ H^\frac{\alpha+1}2(0,T) } +
      \lambda^\frac12 \nm{w}_{L^2(0,T)} \\
      \lesssim{} &
      \nm{g'}_{L^2(0,T)} + \lambda \snm{c_1} +
      \lambda \snm{ g(0)-\lambda c_0 }.
    \end{split}
  \end{equation}
  Integrating both sides of \cref{eq:lxy} in $ (0,T) $, by \cref{lem:Dalpha} we
  obtain
  \[
    \D_{0+}^\alpha I_{0+} w + \lambda I_{0+} w =
    g - g(0) - \lambda c_1t - \lambda  \tilde S_1,
  \]
  so that, setting
  \begin{equation}
    \label{eq:ode-2-1-2}
    y := c_0 + c_1t +  \tilde S_1 + I_{0+}w,
  \end{equation}
  it follows
  \[
    \D_{0+}^\alpha ( y - c_0 - c_1t ) +
    \lambda y - \D_{0+}^\alpha \tilde S_1 - \lambda c_0 +
    g(0)  = g.
  \]
  Since a straightforward calculation gives
  \[
    \D_{0+}^\alpha  \tilde S_1 = g(0) - \lambda  c_0,
  \]
  we see that $ y $ is the solution to problem \cref{eq:ode}. Finally, by
  \cref{eq:ode-2-1-2} and the fact that $ w \in H^\alpha(0,T) $ with $ w(0) = 0
  $, it is evident that $ y \in C^1[0,T] $ with $ y'(0) = c_1 $.

  Next, let us prove \cref{eq:ode-2-1}. Note that
  \begin{align*}
    {} &
    \nm{\lambda y}_{L^2(0,T)} =
    \nm{ g - \D_{0+}^\alpha ( y - c_0 - c_1t ) }_{L^2(0,T)} \\
    ={} &
    \nm{
      g - \D_{0+}^\alpha  \tilde S_1 -
      \D_{0+}^\alpha ( y - c_0 -c_1t -  \tilde S_1 )
    }_{L^2(0,T)} \\
    \leqslant{} &
    \nm{g}_{L^2(0,T)} +
    \nm{ \D_{0+}^\alpha  \tilde S_1 }_{L^2(0,T)} +
    \nm{ \D_{0+}^\alpha ( y - c_0 - c_1t - \tilde S_1 ) }_{L^2(0,T)} \\
    \lesssim{} &
    \nm{g}_{L^2(0,T)} +
    \snm{ g(0) - \lambda c_0 } +
    \nm{ \D_{0+}^\alpha ( y - c_0 - c_1t - \tilde S_1 ) }_{L^2(0,T)}.
  \end{align*}
  Also, \cref{eq:ode-2-1-2} implies
  \[
    ( y - c_0 - c_1t - \tilde S_1 )(0) =
    ( y - c_0 - c_1t - \tilde S_1 )'(0) = 0,
  \]
  and hence
  \begin{align*}
    {} &
    \nm{ \D_{0+}^\alpha ( y - c_0 - c_1t - \tilde S_1 ) }_{L^2(0,T)} =
    \nm{ I_{0+}^{2-\alpha} ( y - c_0 - c_1t - \tilde S_1 )'' }_{L^2(0,T)} \\
    \lesssim{} & \nm{ y- \tilde S_1 }_{H^2(0,T)}
    \lesssim \nm{y-  \tilde S_1}_{ H^\frac{\alpha+3}2 (0,T) }.
  \end{align*}
  Consequently,
  \[
    \lambda \nm{y}_{L^2(0,T)} \lesssim
    \nm{g}_{L^2(0,T)} +
    \snm{ g(0) - \lambda c_0 } +
    \nm{y- \tilde S_1}_{ H^\frac{\alpha+3}2 (0,T) },
  \]
  and then \cref{eq:ode-2-1} follows from the following estimate:
  \begin{align*}
      {} &
      \nm{y- \tilde S_1}_{ H^\frac{\alpha+3}2(0,T) } +
      \lambda^\frac12 \nm{y}_{H^1(0,T)} \\
      \lesssim{} &
      \nm{g'}_{L^2(0,T)} + \lambda^\frac12 \snm{c_0} +
      \lambda \snm{c_1} + \lambda \snm{ g(0) - \lambda c_0 },
  \end{align*}
  which is a direct consequence of \cref{eq:ode-2-1-1,eq:ode-2-1-2}.

  Finally, let us prove \cref{eq:ode-2-2}. Since $ g \in H^2(0,T) $ and $ 1.5 <
  \alpha < 2 $ imply
  \[
    g' - \lambda (c_1 + \tilde S_1') \in H^1(0,T),
  \]
  applying \cref{eq:ode-2-1} to problem \cref{eq:lxy} gives
  \begin{align*}
    {} &
    \nm{w- \tilde S_2'}_{H^\frac{\alpha+3}2(0,T)} +
    \lambda^\frac12 \nm{w}_{H^1(0,T)} +
    \lambda \nm{w}_{L^2(0,T)} \\
    \lesssim{} &
    \nm{g}_{H^2(0,T)} + \lambda \snm{c_1} +
    \lambda \snm{g(0) - \lambda c_0} +
    \lambda \snm{g'(0) - \lambda c_1},
  \end{align*}
  which, together with \cref{eq:ode-2-1-2}, yields \cref{eq:ode-2-2}. This
  completes the proof of the theorem.
\end{proof}

\begin{rem}
  \label{rem:ode}
  \cref{thm:ode-2} shows that the solution $ y $ to problem \cref{eq:ode}
  generally has singularity despite how smooth $ g $ is; however, it also shows
  that we can improve the regularity of $ y $ by subtracting some particular
  singular functions, provided $ g $ is sufficiently regular. Although
  \cref{thm:ode-2} only considers the cases of $ g \in H^1(0,T) $, and $ g \in
  H^2(0,T) $ with restriction $ 1.5 < \alpha < 2 $, using the same technique
  used in the proof of \cref{thm:ode-2}, we can also obtain the singularity information of
  the solution to problem \cref{eq:ode} when $ g $ is of higher regularity than
  $ H^1(0,T) $. For example, if $ g \in H^2(0,T) $ then we can obtain the
  following regularity estimate for all $ 1 < \alpha < 2 $:
  \begin{align*}
    {} &
    \nm{ y- \tilde S_1- \tilde S_2- \tilde S_3 }_{ H^\frac{\alpha+5}2 (0,T) } +
    \lambda^\frac12 \nm{y- \tilde S_1}_{H^2(0,T)} +
    \lambda \nm{y}_{H^1(0,T)} \\
    \lesssim{} &
    \nm{g}_{H^2(0,T)} + \lambda \snm{c_0} +
    \lambda \snm{c_1} + \lambda \snm{ g'(0)-\lambda c_1 } +
    \lambda^2 \snm{ g(0) - \lambda c_0 },
  \end{align*}
  where $ S_1 $ and $ S_2 $ are defined as in \cref{thm:ode-2}, and
  \[
     \tilde S_3(t) := -\lambda \frac{ g(0) - \lambda c_0 }{ \Gamma(2\alpha+1) }
    t^{2\alpha}, \quad 0 < t < T.
  \]
\end{rem}

\section{Main results}
\label{sec:main}
This section is to study the regularity of the weak solution to problem
\cref{eq:model}. Let us first introduce some notations and conventions. We use $
C([0,T]; L^2(\Omega)) $ to denote the set of continuous $ L^2(\Omega) $-valued
functions with domain $ [0,T] $. Given $ v \in L^2(\Omega_T) $, we regard it as
an $ L^2(\Omega) $-valued function with domain $ (0,T) $ as usual, and, for
convenience, we also use $ v $ to denote this $ L^2(\Omega) $-valued function.

We  introduce the following two fractional differential operators:
\[
  \partial_{t,0+}^\frac{\alpha+1}2 :=
  \partial_t^2 I_{t,0+}^\frac{3-\alpha}2
  \quad \text{ and } \quad
  \partial_{t,T-}^\frac{\alpha-1}2 :=
  -\partial_t I_{t,T-}^\frac{3-\alpha}2,
\]
where $ I_{t,0+}^\frac{3-\alpha}2, I_{t,T-}^\frac{3-\alpha}2: L^1(\Omega_T) \to
L^1(\Omega_T) $ are defined, respectively, by
\begin{align*}
  I_{t,0+}^\frac{3-\alpha}2 v(x,t) &:=
  \frac1{\Gamma(\frac{3-\alpha}2)}
  \int_0^t (t-s)^\frac{1-\alpha}2 v(x,s) \, \mathrm{d}s,
  \quad (x,t) \in \Omega_T, \\
  I_{t,T-}^\frac{3-\alpha}2 v(x,t) &:=
  \frac1{\Gamma(\frac{3-\alpha}2)}
  \int_t^T (s-t)^\frac{1-\alpha}2 v(x,s) \, \mathrm{d}s,
  \quad (x,t) \in \Omega_T,
\end{align*}
for all $ v \in L^1(\Omega_T) $. Moreover, from \cref{lem:core,lem:BLM}  it is easy to know that the above two operators have the
following fundamental properties.
\begin{lem}
  \label{lem:xy}
  If $ v \in H^\frac{\alpha-1}2(0,T; L^2(\Omega)) $, then
  \begin{align*}
    &
    \partial_{t,T-}^\frac{\alpha-1}2 v =
    \sum_{k=0}^\infty \phi_k
    \D_{T-}^\frac{\alpha-1}2 (v, \phi_k)_{L^2(\Omega)}
    \quad \text{ and } \\
    &
    \nm{ \partial_{t,T-}^\frac{\alpha-1}2 v }_{ L^2(\Omega_T) } \sim
    \nm{v}_{ H^\frac{\alpha-1}2 (0,T; L^2(\Omega)) }.
  \end{align*}
  If $ v \in H^\frac{\alpha+1}2(0,T; L^2(\Omega)) $ with $ v(0) = 0 $, then
  \begin{align*}
    &
    \partial_{t,0+}^\frac{\alpha+1}2 v =
    \sum_{k=0}^\infty \phi_k
    \D_{0+}^\frac{\alpha+1}2 (v, \phi_k)_{L^2(\Omega)}
    \quad \text{ and } \\
    &
    \nm{ \partial_{t,0+}^\frac{\alpha+1}2 v }_{ L^2(\Omega_T) } \sim
    \nm{v}_{ H^\frac{\alpha+1}2( 0,T; L^2(\Omega) ) }.
  \end{align*}
\end{lem}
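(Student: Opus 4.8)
The plan is to reduce both assertions to the scalar results \cref{lem:core,lem:BLM} by expanding $v$ in the orthonormal basis $\{\phi_k\}$ of $L^2(\Omega)$ and working coordinate by coordinate. Set $d_k(t):=(v(\cdot,t),\phi_k)_{L^2(\Omega)}$, so that, by the definition of the vector-valued Sobolev space, $v\in H^{\frac{\alpha-1}2}(0,T;L^2(\Omega))$ (resp.\ $v\in H^{\frac{\alpha+1}2}(0,T;L^2(\Omega))$ with $v(0)=0$) is equivalent to $\sum_{k=0}^\infty\nm{d_k}_{H^{\frac{\alpha-1}2}(0,T)}^2<\infty$ (resp.\ $\sum_k\nm{d_k}_{H^{\frac{\alpha+1}2}(0,T)}^2<\infty$ with $d_k(0)=0$). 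The first step is to record that the time-only fractional integration operators commute with the coordinate projections: for $v\in L^1(\Omega_T)$ and any $\psi\in L^2(\Omega)$,
\[
  \bigl(I_{t,T-}^{\frac{3-\alpha}2}v,\psi\bigr)_{L^2(\Omega)}=I_{T-}^{\frac{3-\alpha}2}\bigl((v,\psi)_{L^2(\Omega)}\bigr),\qquad \bigl(I_{t,0+}^{\frac{3-\alpha}2}v,\psi\bigr)_{L^2(\Omega)}=I_{0+}^{\frac{3-\alpha}2}\bigl((v,\psi)_{L^2(\Omega)}\bigr)\ \text{ a.e.\ in }(0,T),
\]
which is just Fubini's theorem applied to the double integral over $\Omega\times(t,T)$ (resp.\ $\Omega\times(0,t)$), legitimate since $v\in L^2(\Omega_T)\subset L^1(\Omega_T)$ and the kernel is locally integrable in the time variable.

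For the ``$T-$'' case, \cref{lem:core} gives $d_k\in H^{\frac{\alpha-1}2}(0,T)$ for each $k$, hence $\D_{T-}^{\frac{\alpha-1}2}d_k\in L^2(0,T)$ with $\nm{\D_{T-}^{\frac{\alpha-1}2}d_k}_{L^2(0,T)}^2\sim\nm{d_k}_{H^{\frac{\alpha-1}2}(0,T)}^2$. Summing over $k$ and using the norm equivalence above shows that the partial sums of $\sum_k\phi_k\,\D_{T-}^{\frac{\alpha-1}2}d_k$ form a Cauchy sequence in $L^2(\Omega_T)$; denote the limit $w$, so that $\nm{w}_{L^2(\Omega_T)}\sim\nm{v}_{H^{\frac{\alpha-1}2}(0,T;L^2(\Omega))}$. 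It remains to identify $w$ with $\partial_{t,T-}^{\frac{\alpha-1}2}v=-\partial_t I_{t,T-}^{\frac{3-\alpha}2}v$ as elements of $\mathcal D'(\Omega_T)$. I would test against $\psi\otimes\chi$ with $\psi\in L^2(\Omega)$ and $\chi\in\mathcal D(0,T)$: on one side the commutation identity and the distributional definition $\D_{T-}^{\frac{\alpha-1}2}=-\partial_t I_{T-}^{\frac{3-\alpha}2}$ (valid since $\frac{\alpha-1}2\in(0,1)$) turn $\dual{-\partial_t I_{t,T-}^{\frac{3-\alpha}2}v,\psi\otimes\chi}$ into $\int_0^T\chi'(t)\,(I_{T-}^{\frac{3-\alpha}2}g)(t)\,\mathrm dt$ with $g:=(v,\psi)_{L^2(\Omega)}\in H^{\frac{\alpha-1}2}(0,T)$; on the other side, $g=\sum_k(\phi_k,\psi)_{L^2(\Omega)}d_k$ converges \emph{absolutely} in $H^{\frac{\alpha-1}2}(0,T)$ (Cauchy--Schwarz in $k$: the $\ell^1$ norm of the coefficients is bounded by $\nm{\psi}_{L^2(\Omega)}\nm{v}_{H^{\frac{\alpha-1}2}(0,T;L^2(\Omega))}$), so by continuity of $\D_{T-}^{\frac{\alpha-1}2}\colon H^{\frac{\alpha-1}2}(0,T)\to L^2(0,T)$ one gets $\dual{w,\psi\otimes\chi}=\int_0^T\chi(t)(\D_{T-}^{\frac{\alpha-1}2}g)(t)\,\mathrm dt$, which equals the previous expression. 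Since products $\psi\otimes\chi$ span a dense subspace of $\mathcal D(\Omega_T)$ and $\{\phi_k\}$ is total in $L^2(\Omega)$, this proves $w=\partial_{t,T-}^{\frac{\alpha-1}2}v$ and the first pair of assertions.

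The ``$0+$'' case is analogous, now using $v(0)=0$, which forces $d_k(0)=0$. By \cref{eq:23}, $\D_{0+}^{\frac{\alpha+1}2}d_k=\D_{0+}^{\frac{\alpha-1}2}d_k'$, and \cref{eq:BLM-1} gives $\D_{0+}^{\frac{\alpha+1}2}d_k\in L^2(0,T)$ with $\nm{\D_{0+}^{\frac{\alpha+1}2}d_k}_{L^2(0,T)}^2\sim\nm{d_k}_{H^{\frac{\alpha+1}2}(0,T)}^2$. Summing over $k$ yields $L^2(\Omega_T)$-convergence of $\sum_k\phi_k\,\D_{0+}^{\frac{\alpha+1}2}d_k$ together with the norm equivalence $\nm{\cdot}_{L^2(\Omega_T)}\sim\nm{v}_{H^{\frac{\alpha+1}2}(0,T;L^2(\Omega))}$, and the same test-function argument, invoking the scalar identity $\D_{0+}^{\frac{\alpha+1}2}=\partial_t^2 I_{0+}^{\frac{3-\alpha}2}$ and $I_{0+}^{\frac{3-\alpha}2}v'\in H^1(0,T)$ with vanishing value at $0$, identifies the sum with $\partial_{t,0+}^{\frac{\alpha+1}2}v=\partial_t^2 I_{t,0+}^{\frac{3-\alpha}2}v$.

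I expect the only genuinely delicate point to be this last identification step, namely passing the distributional time derivative $\partial_t$ (or $\partial_t^2$) through the infinite sum over $k$. It is handled not by term-by-term differentiation in any strong sense, but by testing against separated test functions, invoking Fubini and the absolute convergence of $\sum_k(\phi_k,\psi)_{L^2(\Omega)}d_k$ in the relevant scalar Sobolev space; crucially, the $L^2(\Omega_T)$-convergence of the series is obtained \emph{first} from \cref{lem:core,lem:BLM}, which is what guarantees that the limit is an honest element of $L^2(\Omega_T)$ rather than merely a distribution. Everything else is a routine coordinate-wise transcription of \cref{lem:core,lem:BLM}.
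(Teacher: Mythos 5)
Your proof is correct and follows exactly the route the paper intends: the paper offers no written proof, merely asserting that the lemma follows from \cref{lem:core,lem:BLM}, and your coordinate-wise expansion in the eigenbasis, the Fubini commutation of $I_{t,T-}^{\frac{3-\alpha}2}$ and $I_{t,0+}^{\frac{3-\alpha}2}$ with the projections $(\cdot,\phi_k)_{L^2(\Omega)}$, and the identification of the $L^2(\Omega_T)$-limit via separated test functions is precisely the ``easy'' argument being alluded to, with the delicate summation and distributional-identification steps properly justified.
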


Next let us introduce the definition of a weak solution to problem
\cref{eq:model}.
\begin{Def}
  We call $ u \in H^\frac{\alpha+1}2(0,T; L^2(\Omega)) \cap L^2(0,T;
  H_0^1(\Omega)) $ with $ u(0) = u_0 $ a weak solution to problem
  \cref{eq:model} if
  \begin{equation}
    \label{eq:weak-sol}
    \left(
      \partial_{t,0+}^\frac{\alpha+1}2 (u-u_0-tu_1), \
      \partial_{t,T-}^\frac{\alpha-1}2 \varphi
    \right)_{ L^2(\Omega_T) } +
    ( \nabla u, \nabla \varphi )_{ L^2(\Omega_T) } =
    (f, \varphi)_{ L^2(\Omega_T) }
  \end{equation}
  for all $ \varphi \in H^\frac{\alpha-1}2 ( 0,T; L^2(\Omega) ) \cap
  L^2( 0,T; H_0^1(\Omega) ) $.
\end{Def}
\begin{rem}
  By \cref{lem:xy} it is easy to see that the above weak solution is
  well-defined. Also, it is easy to verify that, if $ u $ is a weak solution to
  problem \cref{eq:model}, then
  \[
    \dual{
      \partial_t^\alpha (u - u_0 - tu_1) - \Delta u,
      \varphi
    } = (f,\varphi)_{L^2(\Omega_T)}
  \]
  for all $ \varphi \in \mathcal D(\Omega_T) $, where $ \dual{\cdot,\cdot} $
  denotes the duality pairing between $ \mathcal D'(\Omega_T) $ and $ \mathcal
  D(\Omega_T) $, namely, $ u $ satisfies equation \cref{eq:model} in the
  distribution sense.
\end{rem}

Now we are ready to present the main results of this paper. It
is well known that, there exists, in $ H_0^1(\Omega) \cap H^2(\Omega) $, an
orthonormal basis $ \{ \phi_k |\ k \in \mathbb N \} $ of $ L^2(\Omega) $, and a
nondecreasing sequence $ \{ \lambda_k > 0 |\ k \in \mathbb N \} $ such that
\[
  -\Delta \phi_k = \lambda_k \phi_k
  \quad \text{ in $ \Omega $, for all $ k \in \mathbb N $. }
\]
Also, $ \{ \lambda_k^{-1/2} \phi_k|\ k \in \mathbb N \} $ is an orthonormal
basis of $ H_0^1(\Omega) $ equipped with the inner product $ (\nabla\cdot,
\nabla\cdot)_{L^2(\Omega)} $. For each $ k \in \mathbb N $, define $ c_k \in
H^\alpha(0,T) $ by
\begin{equation}
  \label{eq:c_k}
  \D_{0+}^\alpha \big(
    c_k - c_{k,0} - c_{k,1} t
  \big) + \lambda_k c_k = f_k,
\end{equation}
where
\begin{align*}
  c_{k,0} := \left( u_0, \phi_k \right)_{L^2(\Omega)}, \quad
  c_{k,1} := \left( u_1, \phi_k \right)_{L^2(\Omega)}, \quad
  f_k := \left( f, \phi_k \right)_{L^2(\Omega)},
\end{align*}
and we recall that $ f \in L^2(\Omega_T) $, $ u_0 \in H_0^1(\Omega) $ and $ u_1
\in L^2(\Omega) $. Finally, define
\begin{equation}
  \label{eq:u}
  u(t) := \sum_{k=0}^\infty c_k(t) \phi_k,
  \quad 0 < t < T.
\end{equation}

\begin{thm}
  \label{thm:u}
  Problem \cref{eq:model} has a unique weak solution $ u $ given by \cref{eq:u}.
  Moreover,
  \begin{equation}
    \label{eq:basic_u}
    \begin{split}
      {} &
      \nm{u}_{ H^\frac{\alpha+1}2( 0,T; L^2(\Omega) ) } +
      \nm{u}_{ L^2( 0,T; H_0^1(\Omega) ) } \\
      \lesssim{} &
      \nm{f}_{ L^2( 0,T; L^2(\Omega) ) } +
      \nm{u_0}_{ H_0^1(\Omega) } +
      \nm{u_1}_{L^2(\Omega)}.
    \end{split}
  \end{equation}
\end{thm}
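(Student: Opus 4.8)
The plan is to show that the series \cref{eq:u} defines a weak solution, that it obeys the claimed bound, and that any weak solution must coincide with it. I would proceed in three steps.

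\textbf{Step 1: the candidate solution is well-defined and lies in the right space.} For each $k$, \cref{thm:ode-1} applied to problem \cref{eq:c_k} (with $c_0 = c_{k,0}$, $c_1 = c_{k,1}$, $g = f_k$, $\lambda = \lambda_k \geqslant \lambda_0$; note we may assume $\lambda_0 \geqslant 1$ after a harmless normalization, or else absorb a shift) yields a unique $c_k \in H^{(\alpha+1)/2}(0,T)$ with $c_k(0) = c_{k,0}$ and
\[
  \nm{c_k}_{H^{(\alpha+1)/2}(0,T)}^2 + \lambda_k \nm{c_k}_{L^2(0,T)}^2
  \lesssim \nm{f_k}_{L^2(0,T)}^2 + \lambda_k c_{k,0}^2 + c_{k,1}^2.
\]
Summing over $k$ and using $\sum_k \nm{f_k}_{L^2(0,T)}^2 = \nm{f}_{L^2(\Omega_T)}^2$, $\sum_k \lambda_k c_{k,0}^2 = \nm{u_0}_{H_0^1(\Omega)}^2$, and $\sum_k c_{k,1}^2 = \nm{u_1}_{L^2(\Omega)}^2$ shows the partial sums of \cref{eq:u} are Cauchy in $H^{(\alpha+1)/2}(0,T; L^2(\Omega))$ and that $\sum_k \lambda_k \nm{c_k}_{L^2(0,T)}^2 < \infty$, i.e. $u \in L^2(0,T; H_0^1(\Omega))$, with $u(0) = \sum_k c_{k,0}\phi_k = u_0$. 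This already gives \cref{eq:basic_u}.

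\textbf{Step 2: the candidate satisfies the weak formulation.} Test \cref{eq:weak-sol} with $\varphi = \psi(t)\phi_k$ for $\psi \in H^{(\alpha-1)/2}(0,T)$; by \cref{lem:xy} and orthonormality of $\{\phi_k\}$ the left-hand side collapses to
\[
  \left( \D_{0+}^{(\alpha+1)/2}(c_k - c_{k,0} - c_{k,1}t),\ \D_{T-}^{(\alpha-1)/2}\psi \right)_{L^2(0,T)} + \lambda_k (c_k, \psi)_{L^2(0,T)},
\]
which equals $(f_k, \psi)_{L^2(0,T)}$ by \cref{eq:ode-weak} of \cref{thm:ode-1}. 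General $\varphi \in H^{(\alpha-1)/2}(0,T; L^2(\Omega)) \cap L^2(0,T; H_0^1(\Omega))$ are handled by expanding $\varphi(t) = \sum_k \psi_k(t)\phi_k$, checking that the coefficient functions $\psi_k$ have enough regularity ($\sum_k \nm{\psi_k}_{H^{(\alpha-1)/2}}^2 < \infty$ and $\sum_k \lambda_k\nm{\psi_k}_{L^2}^2 < \infty$), applying the scalar identity termwise, and summing; the absolute convergence needed to interchange sum and the bilinear pairings follows from Cauchy--Schwarz together with the summability estimates from Step 1. Here one uses \cref{lem:xy} to identify $\partial_{t,0+}^{(\alpha+1)/2}$ and $\partial_{t,T-}^{(\alpha-1)/2}$ with the coefficientwise fractional derivatives.

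\textbf{Step 3: uniqueness.} Let $u$ be any weak solution and set $c_k := (u, \phi_k)_{L^2(\Omega)}$. Testing \cref{eq:weak-sol} with $\varphi = \psi(t)\phi_k$ and reversing the computation of Step 2 shows each $c_k$ satisfies \cref{eq:ode-weak} with data $(c_{k,0}, c_{k,1}, f_k)$; since $u(0) = u_0$ forces $c_k(0) = c_{k,0}$, and $u \in H^{(\alpha+1)/2}(0,T; L^2(\Omega))$ forces $c_k \in H^{(\alpha+1)/2}(0,T)$, \cref{thm:ode-1} (via \cref{lem:Dalpha,lem:BLM}, which identify the weak form with the strong equation \cref{eq:ode}) gives that $c_k$ is the \emph{unique} $H^\alpha(0,T)$-solution of \cref{eq:c_k}, hence coincides with the $c_k$ of Step 1. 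Therefore $u$ equals \cref{eq:u}.

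\textbf{Main obstacle.} The routine parts are the summation estimates; the one point requiring care is the passage from test functions of the separated form $\psi(t)\phi_k$ to general $\varphi \in H^{(\alpha-1)/2}(0,T; L^2(\Omega)) \cap L^2(0,T; H_0^1(\Omega))$ — one must verify that such $\varphi$ genuinely decompose as $\sum_k \psi_k \phi_k$ with the two separate summability properties (the $H^{(\alpha-1)/2}$-in-time and the $H_0^1$-in-space parts), and that the mixed bilinear pairing $(\partial_{t,0+}^{(\alpha+1)/2}(u-u_0-tu_1), \partial_{t,T-}^{(\alpha-1)/2}\varphi)_{L^2(\Omega_T)}$ is continuous on this intersection space so that termwise evaluation is legitimate. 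This is exactly what \cref{lem:xy} together with \cref{lem:core,lem:BLM} are designed to supply, so the obstacle is more bookkeeping than substance.
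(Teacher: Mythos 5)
Your proposal is correct and, for existence and the estimate \cref{eq:basic_u}, follows essentially the same route as the paper: apply \cref{thm:ode-1} mode by mode, sum the squared estimates using $\sum_k \lambda_k c_{k,0}^2 = \nm{u_0}_{H_0^1(\Omega)}^2$ etc., and pass from separated test functions $\psi(t)\phi_j$ to general ones by density (the paper phrases your termwise expansion as ``a standard density argument,'' which is the same content).

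The one place you genuinely diverge is uniqueness. The paper does not project an arbitrary weak solution onto the eigenbasis and appeal to ODE uniqueness; instead it takes the difference $e$ of two weak solutions, sets $d_k := (e,\phi_k)_{L^2(\Omega)}$, tests \cref{eq:weak-sol} with $\varphi := d_k'\phi_k$, and obtains the energy identity $\nm{d_k}_{H^{(\alpha+1)/2}(0,T)}^2 + \lambda_k\,\snm{d_k(T)}^2 = 0$ directly from \cref{lem:BLM}. Your reduction to \cref{thm:ode-1} also works, but note a small imprecision: as stated, \cref{thm:ode-1} asserts uniqueness of the solution in $H^\alpha(0,T)$, whereas your $c_k = (u,\phi_k)_{L^2(\Omega)}$ is a priori only in $H^{(\alpha+1)/2}(0,T)$ and satisfies only the weak form \cref{eq:ode-weak}. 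What you actually need is uniqueness of the weak formulation in $\{v \in H^{(\alpha+1)/2}(0,T) : v(0)=c_{k,0}\}$, which is supplied by the Babu\u{s}ka--Lax--Milgram step \emph{inside} the proof of \cref{thm:ode-1} rather than by its statement; you should cite that step explicitly (or run the paper's energy argument, which amounts to the same coercivity from \cref{lem:BLM}(i) applied once rather than per theorem). With that adjustment your argument is complete; the remaining remark about $\lambda_0$ possibly being less than $1$ is a harmless normalization, as you note.
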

 
 Denote, for $ 0 < t < T,$
  \begin{align*}
    S_1(t) := \sum_{k=0}^\infty \frac{ f_k(0) - \lambda_k c_{k,0} }
    { \Gamma(\alpha+1) } t^\alpha \phi_k,
    \qquad 
    S_2(t) :=
    \sum_{k=0}^\infty \frac{ f_k'(0) - \lambda_k c_{k,1} }
    { \Gamma(\alpha+2) } t^{\alpha+1} \phi_k.
 \end{align*}

\begin{thm}
  \label{thm:esti-u}
  Suppose that $ u $ is the weak solution to problem \cref{eq:model}. Then (i)-(ii) hold:
  \begin{enumerate}[(i)]
    \item If $ f \in H^1(0,T; L^2(\Omega)) $, and
      \[
        u_0, u_1, f(0) + \Delta u_0 \in
        H_0^1(\Omega) \cap H^2(\Omega),
      \]
      then
      \begin{equation}
        \begin{split}
          {} &
          \nm{u - S_1}_{ H^\frac{\alpha+3}2( 0,T; L^2(\Omega) ) } +
          \nm{u}_{H^1(0,T; H_0^1(\Omega))} +
          \nm{u}_{L^2(0,T; H^2(\Omega))} \\
          \lesssim{} &
          \nm{f}_{H^1(0,T; L^2(\Omega))} +
          \nm{u_0}_{H_0^1(\Omega)} + \nm{u_1}_{H^2(\Omega)} +
          \nm{f(0) + \Delta u_0}_{H^2(\Omega)}.
        \end{split}
      \end{equation}
    \item If $ 1.5 < \alpha < 2 $, $ f \in H^2(0,T; L^2(\Omega)) $, and
      \[
        u_0, u_1, f(0)+\Delta u_0, f'(0)+\Delta u_1
        \in H_0^1(\Omega) \cap H^2(\Omega),
      \]
      then
      \begin{equation}
        \begin{split}
          {} &
          \nm{u - S_1 - S_2}_{H^\frac{\alpha+5}2(0,T; L^2(\Omega))} +
          \nm{u}_{H^2(0,T; H_0^1(\Omega))} +
          \nm{u}_{H^1(0,T; H^2(\Omega))} \\
          \lesssim{} &
          \nm{f}_{H^2(0,T; L^2(\Omega))} + \nm{u_0}_{H^2(\Omega)} +
          \nm{u_1}_{H^2(\Omega)} +
          \nm{f(0) + \Delta u_0}_{H^2(\Omega)} \\
          {} &
          {} + \nm{f'(0) + \Delta u_1}_{H^2(\Omega)}.
        \end{split}
      \end{equation}
  \end{enumerate}
 \end{thm}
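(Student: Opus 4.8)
The plan is to reduce the whole theorem to the scalar ODE theory of \cref{sec:ode} applied Fourier mode by Fourier mode, exactly as in the proof of \cref{thm:u}. By \cref{thm:u} the weak solution is $u(t)=\sum_{k=0}^\infty c_k(t)\phi_k$, and for each $k$ the coefficient $c_k\in H^\alpha(0,T)$ solves \cref{eq:c_k}, which is precisely problem \cref{eq:ode} with data $c_0=c_{k,0}$, $c_1=c_{k,1}$, $\lambda=\lambda_k$ and $g=f_k$ (only finitely many $\lambda_k$ lie below $1$ and all exceed the fixed $\lambda_0>0$, so \cref{thm:ode-2} is available for every mode with constants still controlled by $\alpha,T,\Omega$). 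Thus I would apply the scalar regularity bounds \cref{eq:ode-2-1} and \cref{eq:ode-2-2} to each $c_k$ and sum in $k$ against the correct powers of $\lambda_k$.

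Before summing I would record the dictionary between the function-space hypotheses and Fourier-coefficient summability. By the definition of the $X$-valued Sobolev spaces in \cref{sec:notation}, $\nm{w}_{H^m(0,T;L^2(\Omega))}^2=\sum_k\nm{(w,\phi_k)_{L^2(\Omega)}}_{H^m(0,T)}^2$, so $f\in H^1(0,T;L^2(\Omega))$ gives $f_k\in H^1(0,T)$ for every $k$ with $f_k'=(f',\phi_k)_{L^2(\Omega)}$, and via $H^1(0,T;L^2(\Omega))\hookrightarrow C([0,T];L^2(\Omega))$ the trace $f(0)$ is meaningful with $f_k(0)=(f(0),\phi_k)_{L^2(\Omega)}$ (likewise $f_k'(0)=(f'(0),\phi_k)_{L^2(\Omega)}$ when $f\in H^2(0,T;L^2(\Omega))$). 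Since $\{\lambda_k^{-1/2}\phi_k\}$ is an orthonormal basis of $H_0^1(\Omega)$ we have $\nm{v}_{H_0^1(\Omega)}^2\sim\sum_k\lambda_k(v,\phi_k)_{L^2(\Omega)}^2$, and since $\partial\Omega$ is $C^2$, elliptic regularity gives $\nm{v}_{H^2(\Omega)}^2\sim\sum_k\lambda_k^2(v,\phi_k)_{L^2(\Omega)}^2$ for $v\in H_0^1(\Omega)\cap H^2(\Omega)$. Using $(-\Delta u_0,\phi_k)_{L^2(\Omega)}=\lambda_k c_{k,0}$ and $(-\Delta u_1,\phi_k)_{L^2(\Omega)}=\lambda_k c_{k,1}$, the hypotheses of (i) amount to $\sum_k\lambda_k c_{k,0}^2<\infty$, $\sum_k\lambda_k^2 c_{k,1}^2<\infty$ and $\sum_k\lambda_k^2(f_k(0)-\lambda_k c_{k,0})^2<\infty$, while those of (ii) in addition give $\sum_k\lambda_k^2 c_{k,0}^2<\infty$ and $\sum_k\lambda_k^2(f_k'(0)-\lambda_k c_{k,1})^2<\infty$. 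Finally, evaluating the functions $\tilde S_1,\tilde S_2$ of \cref{thm:ode-2} at $g=f_k$, $\lambda=\lambda_k$, $c_0=c_{k,0}$, $c_1=c_{k,1}$ produces exactly $\tilde S_{1,k}:=(S_1,\phi_k)_{L^2(\Omega)}$ and $\tilde S_{2,k}:=(S_2,\phi_k)_{L^2(\Omega)}$, so $S_i=\sum_k\tilde S_{i,k}\phi_k$.

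For part (i), I would apply \cref{eq:ode-2-1} to each $c_k$, square it, and sum over $k$. On the left the three terms combine, via the norm identities above, into $\nm{u-S_1}_{H^{(\alpha+3)/2}(0,T;L^2(\Omega))}^2+\nm{u}_{H^1(0,T;H_0^1(\Omega))}^2+\nm{u}_{L^2(0,T;H^2(\Omega))}^2$ (for the last, $\nm{u(t)}_{H^2(\Omega)}^2\sim\sum_k\lambda_k^2 c_k(t)^2$ together with Tonelli); on the right they combine into $\nm{f}_{H^1(0,T;L^2(\Omega))}^2+\nm{u_0}_{H_0^1(\Omega)}^2+\nm{u_1}_{H^2(\Omega)}^2+\nm{f(0)+\Delta u_0}_{H^2(\Omega)}^2$. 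Taking square roots yields (i). Part (ii) is the identical computation with \cref{eq:ode-2-2} in place of \cref{eq:ode-2-1}: the weights $\lambda_k$ and $\lambda_k^2$ in front of $\nm{c_k}_{H^2(0,T)}$ and $\nm{c_k}_{H^1(0,T)}$ reproduce $\nm{u}_{H^2(0,T;H_0^1(\Omega))}$ and $\nm{u}_{H^1(0,T;H^2(\Omega))}$, and the factor $\lambda_k$ now in front of $\snm{c_{k,0}}$ on the right is exactly what upgrades $\nm{u_0}_{H_0^1(\Omega)}$ to $\nm{u_0}_{H^2(\Omega)}$ in the statement; the restriction $1.5<\alpha<2$ enters only as the hypothesis under which \cref{eq:ode-2-2} holds.

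The one point that genuinely needs care is the convergence bookkeeping. A priori $u-S_1$ (resp. $u-S_1-S_2$) is only a formal series, so what the summed inequality actually delivers is \emph{simultaneously} that it belongs to $H^{(\alpha+3)/2}(0,T;L^2(\Omega))$ (resp. $H^{(\alpha+5)/2}(0,T;L^2(\Omega))$) and that the stated bound holds, once the right-hand side has been shown finite from the hypotheses via the dictionary above. One must also observe that $S_1,S_2\in L^2(\Omega_T)$ so that the subtraction is meaningful — immediate from $\sum_k\lambda_k^2(f_k(0)-\lambda_k c_{k,0})^2<\infty$, its analogue for $S_2$, and the boundedness of $t\mapsto t^\alpha,\,t^{\alpha+1}$ on $[0,T]$ — and that $u'$ and $u''$ are obtained termwise, consistent with the $X$-valued Sobolev norm used throughout. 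None of this is deep, but it is where the argument has to be written out in full.
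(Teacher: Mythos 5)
Your proposal is correct and is precisely the argument the paper intends: the paper omits the proof of \cref{thm:esti-u}, stating only that it follows easily from \cref{thm:ode-1,thm:ode-2}, and your mode-by-mode application of \cref{eq:ode-2-1} and \cref{eq:ode-2-2} followed by summation against the eigenvalue-weighted Parseval identities is exactly that omitted computation. The dictionary between the spatial hypotheses and the weighted $\ell^2$ sums (via elliptic regularity on the $C^2$ domain), together with your remark on the finitely many modes with $\lambda_k<1$, supplies the right details.
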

\begin{rem}
  \cref{thm:esti-u} reveals that the solution to problem \cref{eq:model}
  generally has singularity in time. As mentioned in \cref{rem:ode}, we can
  obtain more precise singularity information of the solution to problem
  \cref{eq:ode} when $ g $ is of higher regularity than stated in
  \cref{thm:ode-2}. Correspondingly, we can also investigate the singularity
  structure (with respect to the time variable $ t $) of the solution to problem
  \cref{eq:model} when $ f $, $ u_0 $ and $ u_1 $ are more regular. For example,
  if $ f \in H^2(0,T; L^2(\Omega)) $, $ f(0)+\Delta u_0 \in H_0^3(\Omega) \cap
  H^4(\Omega) $, and
  \[
    u_0, u_1, f'(0)+\Delta u_1 \in H_0^1(\Omega) \cap H^2(\Omega),
  \]
  then
  \begin{align*}
    {} &
    \nm{ u - S_1 - S_2 - S_3 }_{
      H^\frac{\alpha+5}2( 0, T; L^2(\Omega) )
    } +
    \nm{u-S_1}_{ H^2( 0,T; H_0^1(\Omega) ) } +
    \nm{u}_{ H^1( 0,T; H^2(\Omega) ) } \\
    \lesssim{} &
    \nm{f}_{ H^2( 0,T; L^2(\Omega) ) } +
    \nm{u_0}_{H^2(\Omega)} +
    \nm{u_1}_{H^2(\Omega)} +
    \nm{ f'(0)+\Delta u_1 }_{H^2(\Omega)} \\
    {} &
    {} + \nm{ f(0)+\Delta u_0 }_{H^4(\Omega)},
  \end{align*}
  where $ S_1 $ and $ S_2 $ are defined as in \cref{thm:esti-u}, and
  \[
    S_3(t) :=
    \sum_{k=0}^\infty -\lambda_k \frac{f_k(0)-\lambda_k
    c_{k,0}}{\Gamma(2\alpha+1)} t^{2\alpha},
    \quad 0 < t < T.
  \]
\end{rem}

\begin{thm}
  \label{thm:IV}
  Suppose that $ u $ is the weak solution to problem \cref{eq:model}. If $ f \in
  H^1(0,T; L^2(\Omega)) $, and
  \[
    u_0, u_1, f(0)+\Delta u_0 \in H_0^1(\Omega) \cap H^2(\Omega),
  \]
  then $ u' \in C([0,T]; L^2(\Omega)) $ with $ u'(0) = u_1 $.
\end{thm}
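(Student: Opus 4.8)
By \cref{thm:u} the weak solution is $ u=\sum_{k=0}^\infty c_k\phi_k $ as in \cref{eq:u}, each $ c_k $ solving the scalar equation \cref{eq:c_k}. The plan is to show that the termwise differentiated series $ \sum_{k=0}^\infty c_k'\phi_k $ converges in $ C([0,T];L^2(\Omega)) $, to identify its sum with $ u' $, and to evaluate it at $ t=0 $. The starting point is that $ f\in H^1(0,T;L^2(\Omega)) $ gives $ f_k\in H^1(0,T) $ with $ \sum_k\nm{f_k}_{H^1(0,T)}^2=\nm{f}_{H^1(0,T;L^2(\Omega))}^2 $, so \cref{thm:ode-2} applies to each $ c_k $ (with $ g=f_k $, $ \lambda=\lambda_k $, $ c_0=c_{k,0} $, $ c_1=c_{k,1} $); thus $ c_k\in C^1[0,T] $ with $ c_k'(0)=c_{k,1} $, and, writing $ \tilde S_{1,k}(t):=(f_k(0)-\lambda_k c_{k,0})\,t^\alpha/\Gamma(\alpha+1) $ for the $ k $-th coordinate of $ S_1 $ (the function $ \tilde S_1 $ of \cref{thm:ode-2} for the $ k $-th equation), \cref{eq:ode-2-1} holds.

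Next I would turn \cref{eq:ode-2-1} into a bound on $ \nm{c_k'}_{C[0,T]} $. Since $ (\alpha+1)/2>1/2 $, the embedding $ H^\beta(0,T)\hookrightarrow C[0,T] $ ($ \beta>1/2 $) yields $ H^\frac{\alpha+3}2(0,T)\hookrightarrow C^1[0,T] $, whence $ \nm{(c_k-\tilde S_{1,k})'}_{C[0,T]}\lesssim\nm{c_k-\tilde S_{1,k}}_{H^\frac{\alpha+3}2(0,T)} $; and since $ \tilde S_{1,k}'(t)=(f_k(0)-\lambda_k c_{k,0})\,t^{\alpha-1}/\Gamma(\alpha) $ with $ \alpha>1 $, we have $ \nm{\tilde S_{1,k}'}_{C[0,T]}\lesssim\snm{f_k(0)-\lambda_k c_{k,0}} $. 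Combining with \cref{eq:ode-2-1},
\[
  \nm{c_k'}_{C[0,T]}\lesssim
  \nm{f_k}_{H^1(0,T)}+\lambda_k^\frac12\snm{c_{k,0}}+\lambda_k\snm{c_{k,1}}
  +\lambda_k\snm{f_k(0)-\lambda_k c_{k,0}}.
\]
Squaring and summing over $ k $: since $ u_0,\phi_k\in H_0^1(\Omega)\cap H^2(\Omega) $, Green's identity gives $ -\lambda_k c_{k,0}=(\Delta u_0,\phi_k)_{L^2(\Omega)} $, hence $ f_k(0)-\lambda_k c_{k,0}=(f(0)+\Delta u_0,\phi_k)_{L^2(\Omega)} $; using moreover $ \sum_k\lambda_k\snm{c_{k,0}}^2=\nm{u_0}_{H_0^1(\Omega)}^2 $, $ \sum_k\lambda_k^2\snm{c_{k,1}}^2=\nm{\Delta u_1}_{L^2(\Omega)}^2 $ and $ \sum_k\lambda_k^2\snm{f_k(0)-\lambda_k c_{k,0}}^2=\nm{\Delta(f(0)+\Delta u_0)}_{L^2(\Omega)}^2 $, one obtains $ \sum_{k=0}^\infty\nm{c_k'}_{C[0,T]}^2<\infty $, all four right-hand quantities being finite under the stated hypotheses $ f\in H^1(0,T;L^2(\Omega)) $, $ u_0\in H_0^1(\Omega) $, $ u_1, f(0)+\Delta u_0\in H_0^1(\Omega)\cap H^2(\Omega) $.

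To finish, set $ S_N:=\sum_{k=0}^N c_k'\phi_k $; the summability just shown gives $ \sup_{t\in[0,T]}\nm{S_N(t)-S_M(t)}_{L^2(\Omega)}^2\leqslant\sum_{k=M+1}^N\nm{c_k'}_{C[0,T]}^2\to0 $, so $ \{S_N\} $ is Cauchy in $ C([0,T];L^2(\Omega)) $ with limit, say, $ w $. On the other hand $ \sum_{k=0}^N c_k\phi_k\to u $ in $ L^2(0,T;L^2(\Omega)) $ (by \cref{thm:u} the series even converges in $ H^\frac{\alpha+1}2(0,T;L^2(\Omega)) $), while its termwise time derivative $ S_N\to w $ in $ L^2(0,T;L^2(\Omega)) $; closedness of the distributional time derivative forces $ u'=w $. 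Hence $ u' $ admits the continuous representative $ w $, so $ u'\in C([0,T];L^2(\Omega)) $; and $ u'(0)=w(0)=\lim_{N\to\infty}S_N(0)=\lim_{N\to\infty}\sum_{k=0}^N c_{k,1}\phi_k=\sum_{k=0}^\infty(u_1,\phi_k)_{L^2(\Omega)}\phi_k=u_1 $.

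The main obstacle is the summability $ \sum_k\nm{c_k'}_{C[0,T]}^2<\infty $; it rests on two things: subtracting the mild singularity $ \tilde S_{1,k} $ so that \cref{thm:ode-2} places $ c_k-\tilde S_{1,k} $ in $ H^\frac{\alpha+3}2(0,T)\hookrightarrow C^1[0,T] $, exactly the regularity needed to control $ c_k' $ in $ C[0,T] $; and recognizing that the obstructing data combination $ f_k(0)-\lambda_k c_{k,0} $ is the $ k $-th Fourier coefficient of $ f(0)+\Delta u_0 $, so the extra factor $ \lambda_k $ it carries in \cref{eq:ode-2-1} is absorbed precisely by the hypothesis $ f(0)+\Delta u_0\in H^2(\Omega)\cap H_0^1(\Omega) $ rather than merely $ L^2(\Omega) $. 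Everything else --- the Cauchy argument, the identification $ u'=w $, and the evaluation at $ t=0 $ --- is routine.
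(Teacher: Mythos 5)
Your proposal is correct and follows essentially the same route as the paper: decompose $u$ as $S_1$ plus a smoother remainder, control the remainder via \cref{eq:ode-2-1}, use the Sobolev embedding in time (here $\alpha/2>1/2$, resp.\ $(\alpha+1)/2>1/2$) to get continuity of $u'$, and conclude $u'(0)=u_1$ from $c_k'(0)=c_{k,1}$ in \cref{thm:ode-2}. The only difference is presentational: the paper cites the vector-valued estimate of \cref{thm:esti-u} directly and then invokes $u'\in H^{\alpha/2}(0,T;L^2(\Omega))\hookrightarrow C([0,T];L^2(\Omega))$, whereas you re-derive the needed summability $\sum_k\nm{c_k'}_{C[0,T]}^2<\infty$ mode by mode from the scalar estimates, which amounts to unpacking the same argument.
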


Since  \cref{thm:esti-u} follows from \cref{thm:ode-1,thm:ode-2} easily,  we shall only prove \cref{thm:u,thm:IV}
in the remainder of this section.\\

\noindent
{\bf Proof of \cref{thm:u}.}
  If $ u $ is given by \cref{eq:u}, then \cref{eq:basic_u} is straightforward by
  \cref{thm:ode-1}; therefore, we only need to prove that $ u $ given by
  \cref{eq:u} is the unique weak solution to problem \cref{eq:model}.

  Let us first show that $ u $ in \cref{eq:u} is a weak solution to
  problem \cref{eq:model}. Using the definitions of the $ c_k $'s and
  \cref{thm:ode-1} gives
  \[
    \left(
      \D_{0+}^\frac{\alpha+1}2 ( c_k - c_{k,0} - c_{k,1}t ),
      \D_{T-}^\frac{\alpha-1}2 \varphi
    \right)_{L^2(0,T)} +
    \lambda_k ( c_k, \varphi )_{L^2(0,T)} =
    ( f_k,\varphi )_{L^2(0,T)}
  \]
  for all $ \varphi \in H^\frac{\alpha-1}2 (0,T) $ and $ k \in \mathbb N $.
  From \cref{thm:ode-1} it follows $ u \in H^\frac{\alpha+1}2(0,T;
  L^2(\Omega)) $ with $ u(0) = u_0 $, then  \cref{lem:xy} implies
  \[
    \left(
      \partial_{t,0+}^\frac{\alpha+1}2 (u - u_0 - tu_1), \
      \partial_{t,T-}^\frac{\alpha-1}2 (\varphi \phi_j)
    \right)_{L^2(\Omega_T)} +
    \left(
      \nabla u, \nabla(\varphi \phi_j)
    \right)_{ L^2(\Omega_T) } =
    (f,\varphi \phi_j)_{L^2(\Omega_T)}
  \]
  for all $ \varphi \in H^\frac{\alpha-1}2 (0,T) $ and $ j \in \mathbb N $. As
  \[
    \text{span} \left\{
      \varphi \phi_j \middle| \
      \varphi \in H^\frac{\alpha-1}2 (0,T), \
      j \in \mathbb N
    \right\}
  \]
  is dense in $ H^\frac{\alpha-1}2(0,T; L^2(\Omega)) \cap L^2(0,T;
  H_0^1(\Omega)) $, by \cref{lem:xy} a standard density argument yields
  \[
    \left(
      \partial_{t,0+}^\frac{\alpha+1}2 (u-u_0-tu_1), \
      \partial_{t,T-}^\frac{\alpha-1}2 \varphi
    \right)_{ L^2(\Omega_T) } +
    ( \nabla u, \nabla \varphi )_{ L^2(\Omega_T) } =
    (f, \varphi)_{ L^2(\Omega_T) }
  \]
  for all $ \varphi \in H^\frac{\alpha-1}2 (0,T; L^2(\Omega)) \cap L^2(0,T;
  H_0^1(\Omega)) $, which proves that $ u $ is indeed a weak solution to problem
  \cref{eq:model}.

  Now let us prove that $ u $ in \cref{eq:u} is the unique weak solution
  to problem \cref{eq:model}. To this end, assume that $ e \in
  H^\frac{\alpha+1}2 (0,T; L^2(\Omega)) \cap L^2(0,T; H_0^1(\Omega)) $ with $
  e(0) = 0 $ satisfies
  \begin{equation}
    \label{eq:e} \left( \partial_{t,0+}^\frac{\alpha+1}2 e,
    \partial_{t,T-}^\frac{\alpha-1}2 \varphi \right)_{L^2(\Omega_T)} + (\nabla
    e, \nabla \varphi)_{L^2(\Omega_T)} = 0
  \end{equation}
  for all $ \varphi \in H^\frac{\alpha-1}2(0,T; L^2(\Omega)) \cap L^2(0,T;
  H_0^1(\Omega)) $. Then it suffices to show that $ e = 0 $ in $ \Omega_T $. To do
  so, let $ k \in \mathbb N $ and define
  \[
    d_k(t) := (e, \phi_k)_{L^2(\Omega)},
    \quad 0 < t < T.
  \]
  It is obvious that $ d_k \in H^\frac{\alpha+1}2(0,T) $ with $ d_k(0) = 0 $. By
  \cref{lem:xy}, taking $ \varphi := d_k' \phi_k $ in \cref{eq:e} gives
  \[
    \left(
      \D_{0+}^\frac{\alpha+1}2 d_k,
      \D_{T-}^\frac{\alpha-1}2 d_k'
    \right)_{L^2(0,T)} +
    \lambda_k (d_k,d_k')_{L^2(0,T)} = 0.
  \]
  Using integration by parts, by \cref{lem:BLM} we obtain
  \[
    \nm{d_k}_{H^\frac{\alpha+1}2(0,T)}^2 +
    \lambda_k \snm{d_k(T)}^2 = 0,
  \]
  which yields $ d_k = 0 $ in $ (0,T) $. Since $ k \in \mathbb N $ is arbitrary,
  we deduce that $ e = 0 $ in $ \Omega_T $, and hence finish the proof.
\hfill\ensuremath{\blacksquare}
\\

\noindent
{\bf Proof of \cref{thm:IV}.}
  Note that \cref{thm:esti-u} implies
  \[
    u - S_1 \in H^\frac{\alpha+3}2(0,T; L^2(\Omega)).
  \]
  Also, using
  \[
    t^\alpha \in H^{1+\frac\alpha2}(0,T) \text{ and } f(0) + \Delta u_0 \in
    H_0^1(\Omega) \cap H^2(\Omega)
  \]
  gives $ S_1 \in H^{1+\frac\alpha2}(0,T; L^2(\Omega)) $. As a result, we
  obtain $ u \in H^{1+\frac\alpha2}(0,T; L^2(\Omega)) $ and so $ u' \in
  H^\frac\alpha2(0,T; L^2(\Omega)) $. As $ \alpha/2 > 0.5 $ implies $ u' \in
  C([0,T]; L^2(\Omega)) $, it remains to show that
  \[
    c_k'(0) = c_{k,1} \text{ for all $ k \in \mathbb N $. }
  \]
 This  assertion holds indeed  by the definition of $ c_k $ and \cref{thm:ode-2}. This proves the theorem.
\hfill\ensuremath{\blacksquare}

\end{document}